\newtheorem{theorem}{Theorem}
\newtheorem{remark}[theorem]{Remark}
\newtheorem{proposition}[theorem]{Proposition}
\DeclareMathOperator*{\dom}{dom}              %
\DeclareMathOperator*{\divergenz}{div}              %
\DeclareMathOperator*{\ints}{int}         %
\DeclareMathOperator*{\ww}{w}         %
\newcommand{\N}{\mathbb{N}}
\newcommand{\R}{\mathbb{R}}
\newcommand{\Ss}{\textnormal{(S}_+\textnormal{)}}         %
\newcommand{\Lp}[1]{L^{#1}(\Omega)}
\newcommand{\Lprand}[1]{L^{#1}(\partial\Omega)}
\newcommand{\Wp}[1]{W^{1,#1}(\Omega)}
\newcommand{\lan}{\langle}
\newcommand{\ran}{\rangle}
\newcommand{\ph}{\varphi}
\newcommand{\into}{\int_{\Omega}}
\newcommand{\weak}{\overset{\ww}{\to}}
\newcommand{\Linf}{L^{\infty}(\Omega)}
\newcommand{\close}{\overline{\Omega}}
\newcommand{\interior}{\ints \left(C^1(\overline{\Omega})_+\right)}
\renewcommand{\l}{\left}
\renewcommand{\r}{\right}
\newcommand*\diff{\mathop{}\!\mathrm{d}}
\newcommand{\W}{W^{1,p(\cdot)}(\Omega)}
\numberwithin{theorem}{section}
\numberwithin{equation}{section}
\title[A multiplicity theorem for anisotropic Robin equations]{A multiplicity theorem for anisotropic Robin equations}
\author[N.\,S.\,Papageorgiou]{Nikolaos S.\,Papageorgiou}
\address[N.\,S.\,Papageorgiou]{National Technical University, Department of Mathematics, Zografou Campus, Athens 15780, Greece}
\email{npapg@math.ntua.gr}
\author[P.\,Winkert]{Patrick Winkert}
\address[P.\,Winkert]{Technische Universit\"{a}t Berlin, Institut f\"{u}r Mathematik, Stra\ss e des 17.\,Juni 136, 10623 Berlin, Germany}
\email{winkert@math.tu-berlin.de}
\subjclass{35J10, 35J70}
\keywords{Anisotropic maximum principle, anisotropic regularity theory, comparison and truncation techniques, constant sign and nodal solutions, critical groups, variable exponent spaces}
\begin{document}

\begin{abstract}
	In this paper we consider an anisotropic Robin problem driven by the $p(x)$-Laplacian and a superlinear reaction. Applying variational tools along with truncation and comparison techniques
	as well as critical groups, we prove that the problem has at least five nontrivial smooth solutions to be ordered and with sign information: two positive, two negative and the fifth nodal.
\end{abstract}

\maketitle

\section{Introduction}

Let $\Omega \subseteq \R^N$ be a bounded domain with a $C^2$-boundary $\partial \Omega$. In this paper, we study the following anisotropic Robin problem
\begin{equation}\label{problem}
	\begin{aligned}
		-\Delta_{p(\cdot)}u+\xi(x)|u|^{p(x)-2}u &= f(x,u)\quad &&\text{in } \Omega, \\
		|\nabla u|^{p(x)-2} \nabla u \cdot \nu+\beta(x)|u|^{p(x)-2}u&=0&&\text{on }\partial\Omega,
	\end{aligned}
\end{equation}
where  $\nu(x)$ denotes the outer unit normal at $x\in \partial\Omega$, $\beta\in C^{0,\alpha}(\partial\Omega)$ with $\alpha \in (0,1)$, $\beta\geq 0$ and for $p \in C^{0,1}(\close)$ with $1<\min_{x\in\close}p(x)$ we denote by $\Delta_{p(\cdot)}$ the $p(x)$-Laplacian which is given by
\begin{align*}
	\Delta_{p(\cdot)} u = \divergenz \l(|\nabla u|^{p(x)-2} \nabla u \r) \quad\text{for all }u \in \Wp{p(\cdot)}.
\end{align*}
In the left-hand side of \eqref{problem} there is also a potential term $\xi(x)|u|^{p(x)-2}u$ with $\xi \in \Linf$ and $\xi\geq 0$. In the right-hand side of \eqref{problem} there is a Carath\'eodory function $f\colon \Omega\times\R\to\R$, that is, $x\to f(x,s)$ is measurable for all $s\in\R$ and $s\to f(x,s)$ is continuous for a.\,a.\,$x\in\Omega$. We suppose that $f(x,\cdot)$ is $(p_+-1)$-superlinear as $s\to \pm\infty$ but without assuming the usual Ambrosetti-Rabinowitz condition, where $p_+=\max_{x\in\close}p(x)$. Near zero $f(x,\cdot)$ exhibits an oscillatory behavior. 

Using variational tools from the critical point theory along with appropriate truncation and comparison techniques, we prove the existence of at least five nontrivial smooth solutions, all with sign information and ordered. 

Elliptic equations driven by the anisotropic Dirichlet $p$-Laplacian have been studied extensively in the last decade. The books of Diening-Harjulehto-H\"{a}st\"{o}-R$\mathring{\text{u}}$\v{z}i\v{c}ka \cite{3-Diening-Harjulehto-Hasto-Ruzicka-2011} and R\u{a}dulescu-Repov\v{s} \cite{15-Radulescu-Repovs-2015} contain a rich bibliography on the subject. In contrast, the study of
anisotropic Robin problems is lagging behind. Deng \cite{1-Deng-2009} studied the Robin problem
\begin{equation}\label{problem2}
	\begin{aligned}
		-\Delta_{p(\cdot)}u&= \lambda f(x,u)\quad &&\text{in } \Omega, \\
		|\nabla u|^{p(x)-2} \nabla u \cdot \nu+\beta(x)|u|^{p(x)-2}u&=0&&\text{on }\partial\Omega,
	\end{aligned}
\end{equation}
and proved the existence  of two positive solutions of problem \eqref{problem2} when $p\in C^1(\close)$ and under the Ambrosetti-Rabinowitz condition. A similar problem under the same assumptions as in \cite{1-Deng-2009} was treated by Fan-Deng \cite{7-Fan-Deng-2009}, namely
\begin{equation}\label{problem3}
	\begin{aligned}
		-\Delta_{p(\cdot)}u+\lambda |u|^{p(x)-2}u&= f(x,u)\quad &&\text{in } \Omega, \\
		|\nabla u|^{p(x)-2} \nabla u \cdot \nu&=\ph&&\text{on }\partial\Omega.
	\end{aligned}
\end{equation}
Only positive solutions for \eqref{problem3} is shown but no sign-changing solution is obtained. In 2010, Deng-Wang \cite{2-Deng-Wang-2010} considered existence and nonexistence of a nonhomogeneous Neumann problem given by
\begin{equation}\label{problem4}
	\begin{aligned}
		-\Delta_{p(\cdot)}u+\lambda |u|^{p(x)-2}u&= f(x,u)\quad &&\text{in } \Omega, \\
		|\nabla u|^{p(x)-2} \nabla u \cdot \nu&=g(x,u)&&\text{on }\partial\Omega.
	\end{aligned}
\end{equation}
It is proved that there exists a parameter $\lambda^*>0$ such that problem \eqref{problem4} has at least two positive solutions for all $\lambda>\lambda^*$. We also  mention the works of Gasi\'nski-Papageorgiou \cite{8-Gasinski-Papageorgiou-2011}, Papageorgiou-R\u{a}dulescu-Tang \cite{14-Papageorgiou-Radulescu-Tang-2021} and Wang-Fan-Ge \cite{17-Wang-Fan-Ge-2009}. Except for \cite{8-Gasinski-Papageorgiou-2011}, the above mentioned works consider parametric equations and focus on the existence and multiplicity of positive solutions. Gasi\'nski-Papageorgiou \cite{8-Gasinski-Papageorgiou-2011} considered the Neumann problem 
\begin{equation*}
	\begin{aligned}
		-\Delta_{p(\cdot)}u&= f(x,u)\quad &&\text{in } \Omega, \\
		|\nabla u|^{p(x)-2} \nabla u \cdot \nu&=0&&\text{on }\partial\Omega.
	\end{aligned}
\end{equation*}
and prove the existence of three nontrivial smooth solutions but they do not produce nodal solutions. The novelties in our work in contrast to the above mentioned papers can be summarized as follows:
\begin{enumerate}[leftmargin=0.8cm]
	\item[$\bullet$]
		We only need $p$ to be Lipschitz continuous.
	\item[$\bullet$]
		We do not need to assume the Ambrosetti-Rabinowitz condition. We can weaken the assumptions, see \textnormal{H$_1$}\textnormal{(ii)}, \textnormal{(iii)} in Section \ref{section_2}.
	\item[$\bullet$]
		We obtain not only constant sign solutions, but also a sign-changing solution.
	\item[$\bullet$]
		All the solutions we obtain are ordered with concrete sign information.
\end{enumerate}

Finally we mention the works of Deng \cite{Deng-2008} and Deng-Wang-Cheng \cite{Deng-Wang-Cheng-2011} concerning the Steklov and Robin eigenvalue problems of the anisotropic $p$-Laplacian, respectively.\\

The paper is organized as follows. In Section \ref{section_2} we recall the basic properties of the variable exponent Sobolev spaces and the anisotropic $p$-Laplacian, mention some tools/definitions we need later (Cerami-condition, critical groups) and state the main hypotheses on the data of our problem. Section \ref{section_3} deals with the existence of constant sign solutions. The first pair of positive and negative solutions is obtained in Proposition \ref{prop_4} by using the direct method of calculus and the existence of the second pair of positive and negative solutions, stated in Proposition \ref{prop_5}, is proved via the mountain pass theorem. The rest of the section is devoted to the existence of extremal constant sign solutions, see Proposition \ref{prop_8}, which are needed later in order to find a sign-changing solution. Finally, Section \ref{section_4} is concerned with the existence of a nodal solution to problem \eqref{problem} which lies between the extremal constant sign solutions. This result is stated in Proposition \ref{prop_9} and the proof relies on the combination of the mountain pass theorem and critical groups. The full multiplicity result is given at the end in Theorem \ref{main_result}.

\section{Preliminaries and Hypotheses}\label{section_2}

The study of problem \eqref{problem} uses function spaces with variable exponents. A comprehensive introduction on the subject can be found in the book of Diening-Harjulehto-H\"{a}st\"{o}-R$\mathring{\text{u}}$\v{z}i\v{c}ka \cite{3-Diening-Harjulehto-Hasto-Ruzicka-2011}.

In what follows we denote by $M(\Omega)$ the vector space of functions $u\colon \Omega\to\R$ which are measurable. As usual, we identify two such functions when they differ only on a Lebesgue-null set. Given $r \in C(\close)$ we define
\begin{align*}
	r_-=\min_{x\in \close}r(x) \quad\text{and}\quad r_+=\max_{x\in\close} r(x)
\end{align*}
and introduce the set 
\begin{align*}
	E_1=\l \{r \in C(\close)\, : \, 1<r_- \r\}.
\end{align*}
Then, for $r \in E_1$, we introduce the variable exponent Lebesgue space $\Lp{r(\cdot)}$ defined by
\begin{align*}
	\Lp{r(\cdot)}=\l\{u \in M(\Omega)\,:\, \into |u|^{r(x)} \diff x<\infty \r\}.
\end{align*}
We equip this space with the Luxemburg norm defined by
\begin{align*}
	\|u\|_{r(\cdot)} =\inf \l \{\lambda>0 \, : \, \into \l(\frac{|u|}{\lambda}\r)^{r(x)}\diff x \leq 1 \r\}.
\end{align*}
Then $\Lp{r(\cdot)}$ is a separable, reflexive Banach space.

Moreover, we denote by $r'(x)=\frac{r(x)}{r(x)-1}$ the conjugate variable exponent to $r\in E_1$, that is,
\begin{align*}
	\frac{1}{r(x)}+\frac{1}{r'(x)}=1 \quad\text{for all }x\in\close.
\end{align*}
It is clear that $r'\in E_1$. We know that $\Lp{r(\cdot)}^*=\Lp{r'(\cdot)}$ and the following version of H\"older's inequality holds
\begin{align*}
	\into |uv| \diff x \leq \l[\frac{1}{r_-}+\frac{1}{r'_-}\r] \|u\|_{r(\cdot)}\|v\|_{r'(\cdot)}
\end{align*}
for all $u\in \Lp{r(\cdot)}$ and for all $v \in \Lp{r'(\cdot)}$.

On the boundary $\partial\Omega$ we consider the $(N-1)$-dimensional Hausdorff (surface) measure $\sigma$. Using this measure we can define the boundary variable exponent Lebesgue spaces $\Lprand{r(\cdot)}$ for $r \in E_1$.

The corresponding variable exponent Sobolev spaces can be defined in a natural way using the variable exponent Lebesgue spaces. So, given $r \in E_1$, we define 
\begin{align*}
	\Wp{r(\cdot)}=\l\{ u \in \Lp{r(\cdot)} \,:\, |\nabla u| \in \Lp{r(\cdot)}\r\}
\end{align*}
with $\nabla u$ being the gradient of $u\colon\Omega\to\R$. This space is equipped with the norm
\begin{align*}
	\|u\|_{1,r(\cdot)}=\|u\|_{r(\cdot)}+\|\nabla u\|_{r(\cdot)} \quad\text{for all } u \in \Wp{r(\cdot)}
\end{align*}
with $\|\nabla u\|_{r(\cdot)}=\|\,|\nabla u|\,\|_{r(\cdot)}$. The space $\Wp{r(\cdot)}$ is a separable and reflexive Banach space. 

For $r \in E_1$ we introduce the critical Sobolev variable exponents $r^*$ and $r_*$ defined by
\begin{align*}
	r^*(x)=
	\begin{cases}
		\frac{Nr(x)}{N-r(x)} & \text{if }r(x)<N,\\
		\ell_1(x)& \text{if } N \leq r(x),
	\end{cases} \quad\text{for all }x\in\close.
\end{align*}
and
\begin{align*}
	r_*(x)=
	\begin{cases}
		\frac{(N-1)r(x)}{N-r(x)} & \text{if }r(x)<N,\\
		\ell_2(x)& \text{if } N \leq r(x),
	\end{cases} \quad\text{for all }x\in\partial\Omega,
\end{align*}
where $\ell_1 \in C(\close)$, $\ell_2 \in C(\partial\Omega)$ are arbitrarily chosen such that $r(x)<\ell_1(x)$ for all $x \in \close$ and $r(x)<\ell_2(x)$ for all $x \in \partial\Omega$.

Suppose that $r \in C^{0,1}(\close)\cap E_1$ and $q \in C(\close)$ with $1 \leq q_-$. Then we have the following anisotropic Sobolev embeddings
\begin{align*}
	&\Wp{r(\cdot)}\hookrightarrow \Lp{q(\cdot)} \quad \text{continuously if } q(x) \leq r^*(x) \text{ for all }x\in\close,\\
	&\Wp{r(\cdot)}\hookrightarrow \Lp{q(\cdot)} \quad \text{compactly if } q(x) < r^*(x) \text{ for all }x\in\close.
\end{align*}

Similarly, if $r \in C^{0,1}(\close)\cap E_1$ and $q \in C(\partial\Omega)$ with $1 \leq q_-$, then we have the anisotropic trace embeddings
\begin{align*}
	&\Wp{r(\cdot)}\hookrightarrow \Lprand{q(\cdot)} \quad \text{continuously if } q(x) \leq r_*(x) \text{ for all }x\in\close,\\
	&\Wp{r(\cdot)}\hookrightarrow \Lprand{q(\cdot)} \quad \text{compactly if } q(x) < r_*(x) \text{ for all }x\in\close.
\end{align*}
We refer to Diening-Harjulehto-H\"{a}st\"{o}-R$\mathring{\text{u}}$\v{z}i\v{c}ka \cite{3-Diening-Harjulehto-Hasto-Ruzicka-2011} and Fan \cite{5-Fan-2008}.

In the study of these variable exponent spaces, the following modular function is useful
\begin{align*}
	\varrho_{r(\cdot)}(u) =\into |u|^{r(x)}\diff x \quad\text{for all } u\in\Lp{r(\cdot)}.
\end{align*}
For $u \in \Wp{r(\cdot)}$ we write $\varrho_{r(\cdot)}(\nabla u)=\varrho_{r(\cdot)}(|\nabla u|)$.

The following proposition illustrates the relation between this modular and the Luxemburg norm.

\begin{proposition}\label{prop_1}
	Let $r\in E_1$, let $u \in\Lp{r(\cdot)}$ and let $\{u_n\}_{n\in\N}\subseteq \Lp{r(\cdot)}$. The following assertions hold:
	\begin{enumerate}
		\item[\textnormal{(i)}]
		$\|u\|_{r(\cdot)}=\eta \quad\Longleftrightarrow\quad \varrho_{r(\cdot)}\l(\frac{u}{\eta}\r)=1$;
		\item[\textnormal{(ii)}]
		$\|u\|_{r(\cdot)}<1$ (resp. $=1$, $>1$) $\quad\Longleftrightarrow\quad \varrho_{r(\cdot)}(u)<1$ (resp. $=1$, $>1$);
		\item[\textnormal{(iii)}]
		$\|u\|_{r(\cdot)}\leq 1$ $\quad\Longrightarrow\quad$ $\|u\|_{r(\cdot)}^{r_+} \leq \varrho_{r(\cdot)}(u) \leq \|u\|_{r(\cdot)}^{r_-}$;\\
		$\|u\|_{r(\cdot)}\geq 1$ $\quad\Longrightarrow\quad$ $\|u\|_{r(\cdot)}^{r_-} \leq \varrho_{r(\cdot)}(u) \leq \|u\|_{r(\cdot)}^{r_+}$;
		\item[\textnormal{(iv)}]
		$\|u_n\|_{r(\cdot)} \to 0$  $\quad\Longleftrightarrow\quad\varrho_{r(\cdot)}(u_n)\to 0$;
		\item[\textnormal{(v)}]
		$\|u_n\|_{r(\cdot)} \to \infty$ $\quad\Longleftrightarrow\quad\varrho_{r(\cdot)}(u_n)\to \infty$;
	\end{enumerate}
\end{proposition}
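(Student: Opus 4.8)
The plan is to prove each equivalence by exploiting the definition of the Luxemburg norm together with the strict monotonicity and continuity of the map $\lambda \mapsto \varrho_{r(\cdot)}(u/\lambda)$ on $(0,\infty)$. The key analytic fact, which I would establish first, is that for fixed $u \neq 0$ the function $\lambda \mapsto \varrho_{r(\cdot)}(u/\lambda) = \into \lambda^{-r(x)}|u|^{r(x)}\diff x$ is continuous (by dominated convergence, using $1 < r_- \le r(x) \le r_+ < \infty$), strictly decreasing, tends to $+\infty$ as $\lambda \to 0^+$ and to $0$ as $\lambda \to \infty$. Consequently there is a \emph{unique} $\lambda_0>0$ with $\varrho_{r(\cdot)}(u/\lambda_0)=1$, and the infimum in the definition of $\|u\|_{r(\cdot)}$ is attained precisely at that $\lambda_0$; this gives part (i) immediately (the case $u=0$ being trivial since then $\eta=0$ and the statement is read accordingly).

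For part (ii), I would combine (i) with the monotonicity: if $\|u\|_{r(\cdot)} = \eta < 1$ then $\varrho_{r(\cdot)}(u/\eta)=1$ and since $1/\eta>1$ and $t\mapsto t^{r(x)}$ is increasing for $t\ge 0$ we get $\varrho_{r(\cdot)}(u) = \into |u|^{r(x)}\diff x \le \into (|u|/\eta)^{r(x)}\diff x = 1$, with strictness unless $u=0$; conversely $\varrho_{r(\cdot)}(u)<1$ forces, by monotonicity, the crossover value $\lambda_0$ to lie in $(0,1)$. The cases $=1$ and $>1$ are handled the same way. Part (iii) follows from (i) by a pointwise estimate: writing $\eta = \|u\|_{r(\cdot)}$ and using $\varrho_{r(\cdot)}(u/\eta)=1$, I would bound $\varrho_{r(\cdot)}(u) = \into \eta^{r(x)} (|u|/\eta)^{r(x)}\diff x$ and note that when $\eta \le 1$ we have $\eta^{r_+}\le \eta^{r(x)} \le \eta^{r_-}$, so $\eta^{r_+} = \eta^{r_+}\varrho_{r(\cdot)}(u/\eta) \le \varrho_{r(\cdot)}(u) \le \eta^{r_-}\varrho_{r(\cdot)}(u/\eta) = \eta^{r_-}$, and symmetrically when $\eta \ge 1$.

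Parts (iv) and (v) are then obtained from (iii) and (ii). For (iv): if $\|u_n\|_{r(\cdot)}\to 0$ then eventually $\|u_n\|_{r(\cdot)}\le 1$, so $\varrho_{r(\cdot)}(u_n)\le \|u_n\|_{r(\cdot)}^{r_-}\to 0$; conversely if $\varrho_{r(\cdot)}(u_n)\to 0$ then eventually $\varrho_{r(\cdot)}(u_n)<1$, hence by (ii) $\|u_n\|_{r(\cdot)}<1$, and then (iii) gives $\|u_n\|_{r(\cdot)}^{r_+}\le \varrho_{r(\cdot)}(u_n)$, so $\|u_n\|_{r(\cdot)}\le \varrho_{r(\cdot)}(u_n)^{1/r_+}\to 0$. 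Part (v) is entirely analogous, using the $\|u\|_{r(\cdot)}\ge 1$ branch of (iii). I do not anticipate a genuine obstacle here; the only point requiring care is the continuity and limiting behaviour of $\lambda\mapsto\varrho_{r(\cdot)}(u/\lambda)$ that underpins the attainment of the infimum in (i) — once that is in place, everything else is bookkeeping with the two regimes $\eta\lessgtr 1$ and the inequalities $r_-\le r(x)\le r_+$.
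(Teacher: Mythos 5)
Your proof is correct: the monotonicity/continuity analysis of $\lambda \mapsto \varrho_{r(\cdot)}(u/\lambda)$ (valid here because $1<r_-\le r_+<\infty$), the attainment of the infimum in the Luxemburg norm, and the two-regime estimates $\eta^{r_\pm}$ are exactly the standard argument. The paper itself states Proposition 2.1 without proof, as a known result from the variable exponent literature (Diening--Harjulehto--H\"ast\"o--R\r{u}\v{z}i\v{c}ka), and your argument reproduces that classical proof, so there is nothing to add beyond handling the trivial case $u=0$ consistently in parts (i)--(iii), which you essentially note.
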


Let $A_{r(\cdot)}\colon \Wp{r(\cdot)}\to \Wp{r(\cdot)}^*$ be the nonlinear operator defined by
\begin{align*}
	\l\lan A_{r(\cdot)}(u),h\r\ran= \into |\nabla u|^{r(x)-2} \nabla u \cdot \nabla h\diff x\quad\text{for all }u,h\in\Wp{r(\cdot)}.
\end{align*} 
This operator has the following properties, see, for example Gasi\'nski-Papageorgiou \cite{8-Gasinski-Papageorgiou-2011} and R\u{a}dulescu-Repov\v{s} \cite[p.\,40]{15-Radulescu-Repovs-2015}. 

\begin{proposition}\label{prop_2}
	The operator $A_{r(\cdot)}\colon \Wp{r(\cdot)}\to \Wp{r(\cdot)}^* $ is bounded (so it maps bounded sets to bounded sets), continuous, strictly monotone (which implies it is also maximal monotone) and of type $\Ss$, that is,
	\begin{align*}
		u_n\weak u \quad\text{in }\Wp{r(\cdot)} \quad\text{and}\quad \limsup_{n\to\infty} \l\lan A_{r(\cdot)}(u_n),u_n-u\r\ran \leq 0
	\end{align*}
	imply $u_n\to u$ in $\Wp{r(\cdot)}$.
\end{proposition}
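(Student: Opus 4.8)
The plan is to establish the four standard properties of $A_{r(\cdot)}$ in turn. First I would handle \emph{boundedness}: for $u, h \in \Wp{r(\cdot)}$, apply H\"older's inequality to $\lan A_{r(\cdot)}(u), h\ran$ to get $|\lan A_{r(\cdot)}(u), h\ran| \leq c \,\||\nabla u|^{r(\cdot)-1}\|_{r'(\cdot)} \|\nabla h\|_{r(\cdot)}$, then use Proposition \ref{prop_1}(iii)/(v) to control $\||\nabla u|^{r(\cdot)-1}\|_{r'(\cdot)}$ by a polynomial expression in $\|\nabla u\|_{r(\cdot)}$ (splitting into the cases $\|\nabla u\|_{r(\cdot)} \le 1$ and $> 1$ via the modular), which yields that bounded sets map to bounded sets.

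Next, \emph{strict monotonicity}: this follows from the elementary vector inequality $(|a|^{t-2}a - |b|^{t-2}b)\cdot(a-b) > 0$ for $a \neq b$ in $\R^N$ and $t > 1$, applied pointwise with $t = r(x)$, $a = \nabla u(x)$, $b = \nabla v(x)$; integrating gives $\lan A_{r(\cdot)}(u) - A_{r(\cdot)}(v), u - v\ran > 0$ whenever $u \neq v$. Strict monotonicity plus continuity (hemicontinuity suffices) gives maximal monotonicity by the standard Minty--Browder criterion. For \emph{continuity} I would take $u_n \to u$ in $\Wp{r(\cdot)}$, so $\nabla u_n \to \nabla u$ in $\Lp{r(\cdot)}$; passing to a subsequence with a dominating function, $|\nabla u_n|^{r(\cdot)-2}\nabla u_n \to |\nabla u|^{r(\cdot)-2}\nabla u$ pointwise a.e.\ and dominated (by a suitable $\Lp{r'(\cdot)}$ function), so by the dominated convergence theorem for variable exponent spaces (equivalently, modular convergence plus Proposition \ref{prop_1}(iv)) the Nemytskii-type map converges in $\Lp{r'(\cdot)}$, hence $A_{r(\cdot)}(u_n) \to A_{r(\cdot)}(u)$ in the dual; the Urysohn subsequence principle upgrades this to convergence of the full sequence.

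Finally, the \Ss-property, which is the main obstacle. Suppose $u_n \weak u$ in $\Wp{r(\cdot)}$ and $\lims \lan A_{r(\cdot)}(u_n), u_n - u\ran \leq 0$. By monotonicity of $A_{r(\cdot)}$ one has $\lan A_{r(\cdot)}(u_n), u_n - u\ran \geq \lan A_{r(\cdot)}(u), u_n - u\ran$; since $u_n \weak u$ the right side tends to $0$, so in fact $\lan A_{r(\cdot)}(u_n), u_n - u\ran \to 0$. Expanding,
\begin{align*}
	\into \l(|\nabla u_n|^{r(x)-2}\nabla u_n - |\nabla u|^{r(x)-2}\nabla u\r)\cdot(\nabla u_n - \nabla u)\diff x \to 0,
\end{align*}
and the integrand is nonnegative by the monotonicity inequality above. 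Using the quantitative forms of that inequality (for $r(x) \geq 2$ one has a lower bound of order $|\nabla u_n - \nabla u|^{r(x)}$; for $1 < r(x) < 2$ one uses the bound involving $(|\nabla u_n| + |\nabla u|)^{r(x)-2}|\nabla u_n - \nabla u|^2$ together with a H\"older argument), one deduces $\varrho_{r(\cdot)}(\nabla u_n - \nabla u) \to 0$, hence $\|\nabla u_n - \nabla u\|_{r(\cdot)} \to 0$ by Proposition \ref{prop_1}(iv). Combined with $u_n \to u$ in $\Lp{r(\cdot)}$ (from compact embedding, since $u_n \weak u$ in $\Wp{r(\cdot)}$), this gives $u_n \to u$ in $\Wp{r(\cdot)}$. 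The delicate point is handling the subcritical exponent range $1 < r(x) < 2$ uniformly in $x$, where the pointwise coercivity estimate degenerates; this is where I would invoke the known vector inequalities together with a careful splitting of $\Omega$ into the regions $\{|\nabla u_n| + |\nabla u| \le 1\}$ and its complement, or alternatively cite the corresponding argument in \cite{8-Gasinski-Papageorgiou-2011} or \cite[p.\,40]{15-Radulescu-Repovs-2015}.
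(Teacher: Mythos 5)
The paper never proves this proposition: it is quoted as known, with a pointer to Gasi\'nski--Papageorgiou \cite{8-Gasinski-Papageorgiou-2011} and R\u{a}dulescu--Repov\v{s} \cite[p.\,40]{15-Radulescu-Repovs-2015}. Your sketch reconstructs essentially the standard argument found in those sources: H\"older's inequality plus the modular--norm estimates of Proposition \ref{prop_1} for boundedness; a Nemytskii/dominated-convergence argument (with the Urysohn subsequence principle) for continuity; monotonicity plus hemicontinuity for maximal monotonicity; and, for the \Ss-property, the monotonicity trick forcing $\lan A_{r(\cdot)}(u_n),u_n-u\ran\to 0$, the quantitative vector inequalities split according to $r(x)\geq 2$ and $1<r(x)<2$ (with constants uniform since $1<r_-\leq r_+<\infty$), and the compact embedding of $\Wp{r(\cdot)}$ into $\Lp{r(\cdot)}$ to upgrade gradient convergence to convergence in $\Wp{r(\cdot)}$. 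All of this is sound and is exactly how the cited references proceed.

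The one step that fails as written is strict monotonicity. The pointwise inequality gives a strictly positive integrand only where $\nabla u\neq\nabla v$, so integration yields $\lan A_{r(\cdot)}(u)-A_{r(\cdot)}(v),u-v\ran>0$ only when $\nabla u\neq\nabla v$ on a set of positive measure; it does not cover all $u\neq v$. On the full space $\Wp{r(\cdot)}$ distinct functions may share a gradient: for connected $\Omega$ take $v=u+c$ with $c\neq 0$ constant, so $A_{r(\cdot)}(u)=A_{r(\cdot)}(v)$ and the pairing vanishes. Hence $A_{r(\cdot)}$ is in fact only monotone on $\Wp{r(\cdot)}$; strictness holds on $W^{1,r(\cdot)}_0(\Omega)$, or for the full derivative $\gamma_{p(\cdot)}'$ once the $\xi$- and $\beta$-terms (not both trivial, by H$_0$) are included. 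This looseness is inherited from the statement itself and is harmless for the rest of the paper, which only uses monotonicity, maximal monotonicity (for which, as you correctly note, monotone plus hemicontinuous suffices -- strictness alone would not) and the \Ss-property; but your claim ``$>0$ whenever $u\neq v$'' should be qualified accordingly.
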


In the anisotropic regularity theory we need the Banach space $C^1(\close)$. This is an ordered Banach space with positive order cone
\begin{align*}
	C^1(\overline{\Omega})_+=\left\{u \in C^1(\overline{\Omega})\,:\, u(x) \geq 0 \text{ for all } x \in \close\right\}.
\end{align*}
This cone has a nonempty interior given by
\begin{align*}
	\ints \left(C^1(\overline{\Omega})_+\right)=\left\{u \in C^1(\overline{\Omega})_+\,:\, u(x)>0 \text{ for all } x \in \close  \right\}.
\end{align*}
We will also use another open cone in $C^1(\overline{\Omega})$ defined by
\begin{align*}
	D_+=\l \{u\in C^1(\close)\,:\,u(x)>0 \text{ for all }x\in\Omega \text{ and }  \frac{\partial u}{\partial \nu}\Big|_{\partial\Omega \cap u^{-1}(0)}<0 \r\}
\end{align*}
where $\frac{\partial u}{\partial \nu}=\nabla u \cdot \nu$.

Given $u\in \Wp{r(\cdot)}$, we set $u^{\pm}=\max \{\pm u,0\}$ being the positive and negative part of $u$, respectively. We know that $u=u^+-u^-$, $|u|=u^++u^-$ and $u^{\pm}\in\Wp{r(\cdot)}$. If $u,v\colon \Omega \to \R$ are measurable functions and $u(x)\leq v(x)$ for a.\,a.\,$x\in\Omega$, then we introduce the following order interval in $\Wp{r(\cdot)}$
\begin{align*}
	[u,v]&=\left\{h\in\Wp{r(\cdot)}: u(x) \leq h(x) \leq v(x) \text{ for a.\,a.\,}x\in\Omega\right\}.
\end{align*}
Moreover, we denote by $\sideset{}{_{C^1(\close)}} \ints [u,v]$  the interior of $[u,v]\cap C^1(\close)$ in $C^1(\close)$. Furthermore, we define
\begin{align*}
	[u) & = \left\{h\in \Wp{r(\cdot)}: u(x) \leq h(x) \text{ for a.\,a.\,}x\in\Omega\right\}.
\end{align*}

Suppose that $X$ is a Banach space and $\ph\in C^1(X)$. We introduce the following sets 
\begin{align*}
	K_\ph&=\left\{ u\in X\,:\, \ph'(u)=0\right\},\\
	\ph^c&=\l\{u\in X\,:\,\ph(u)\leq c\r\}\quad\text{with }c\in \R.
\end{align*}

We say that $\ph$ satisfies the ``Cerami condition'', C-condition for short, if every sequence $\{u_n\}_{n\in\N}\subseteq X$ such that $\{\ph(u_n)\}_{n\in\N}\subseteq \R$ is bounded and 
\begin{align*}
	\left(1+\|u_n\|_X\right)\ph'(u_n) \to 0\quad\text{in }X^* \quad\text{as }n\to \infty,
\end{align*}    
admits a strongly convergent subsequence. 

If $Y_2 \subseteq Y_1 \subseteq X$, then we denote by $H_k(Y_1,Y_2)$ with $k \in \N_0$, the $k\overset{\text{th}}{=}$-relative singular homology group with integer coefficients. If $u \in K_\ph$ is isolated, then the $k\overset{\text{th}}{=}$-critical group of $\ph$ at $u$ is defined by
\begin{align*}
	C_k(\ph,u)=H_k\l(\ph^c\cap U, \l(\ph^c\cap U\r)\setminus \{u\}\r)\quad\text{for all }k \in \N_0
\end{align*}
with $c=\ph(u)$ and a neighborhood $U$ of $u$ such that $\ph^c\cap K_\ph\cap U=\{u\}$. The excision property of singular homology implies that this definition of critical groups is independent of the isolating neighborhood $U$.

Now we introduce our hypotheses on the exponent $p(\cdot)$, the potential $\xi(\cdot)$ and the boundary coefficient $\beta(\cdot)$:
\begin{enumerate}
	\item[\textnormal{H$_0$}:]
	$p\in C^{0,1}(\close)$, $1<p_-(x)=\min_{x\in\close}p(x)<N$, $\xi \in \Linf$, $\beta \in C^{0,\alpha}(\partial\Omega)$ with $\alpha \in (0,1)$, $\xi(x) \geq 0$ for a.\,a.\,$x\in\Omega$, $\beta(x) \geq 0$ for all $x \in \partial\Omega$ and $\xi \not\equiv 0$ or $\beta\not\equiv 0$.
\end{enumerate}

Note that the case $\beta=0$ is also included and corresponds to the Neumann problem.

We introduce the $C^1$-functional $\gamma_{p(\cdot)}\colon\W\to \R$ defined by
\begin{align*}
	\gamma_{p(\cdot)}(u)=\into \frac{1}{p(x)}|\nabla u|^{p(x)}\diff x+\into \frac{\xi(x)}{p(x)} |u|^{p(x)}\diff x+\int_{\partial\Omega} \frac{\beta(x)}{p(x)}|u|^{p(x)}\diff \sigma
\end{align*}
for all $u \in \W$. We have
\begin{align*}
	\l\langle\gamma_{p(\cdot)}'(u),h \r\rangle
	=\l\langle A_{p(\cdot)}(u),h\r\rangle+\into \xi(x)|u|^{p(x)-2}uh\diff x+\int_{\partial\Omega}\beta(x)|u|^{p(x)-2}uh\diff \sigma
\end{align*}
for all $u,h\in \W$. Moreover, let $\varrho_{0}\colon \W\to \R$ be the modular function defined by
\begin{align*}
	\varrho_{0}(u)=\varrho_{p(\cdot)}(\nabla u)+\into \xi(x)|u|^{p(x)}\diff x+\int_{\partial\Omega}\beta(x)|u|^{p(x)}\diff \sigma
\end{align*}
for all $u \in \W$. 

In the sequel we denote by $\|\,\cdot\,\|$ the norm of the Sobolev space $\W$ defined by
\begin{align*}
	\|u\|=\|u\|_{p(\cdot)}+\|\nabla u\|_{p(\cdot)}\quad\text{for }u\in \W.
\end{align*}

The following estimates for $\gamma_{p(\cdot)}(\cdot)$ will be useful in what follows. The result can be found in the recent work of Papageorgiou-R\u{a}dulescu-Tang \cite{14-Papageorgiou-Radulescu-Tang-2021}.

\begin{proposition}\label{prop_3}
	If hypotheses \textnormal{H$_0$} hold, then there exist $\hat{c}_0, \hat{c}>0$ such that
	\begin{align*}
		\hat{c} \|u\|^{p_+} \leq \frac{1}{p_+} \varrho_{0}(u) \leq \gamma_{p(\cdot)}(u) \leq \frac{1}{p_-} \varrho_{0}(u) \leq \hat{c}_0 \|u\|^{p_-}\qquad &\text{if } \|u\| \leq 1,\\
		\hat{c} \|u\|^{p_-} \leq \frac{1}{p_+} \varrho_{0}(u) \leq \gamma_{p(\cdot)}(u) \leq \frac{1}{p_-} \varrho_{0}(u) \leq \hat{c}_0 \|u\|^{p_+}\qquad &\text{if } \|u\| \geq 1.
	\end{align*}
\end{proposition}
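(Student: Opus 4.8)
The two middle inequalities $\tfrac{1}{p_+}\varrho_0(u)\le\gamma_{p(\cdot)}(u)\le\tfrac1{p_-}\varrho_0(u)$ are immediate: since $0<p_-\le p(x)\le p_+$ on $\close$ and, because $\xi\ge0$ and $\beta\ge0$, each of the three integrands defining $\gamma_{p(\cdot)}$ and $\varrho_0$ is pointwise nonnegative, dividing by $p(x)$ gives the bound termwise. So the real content is to sandwich $\varrho_0(u)$ between powers of $\|u\|$, the exponent switching between $p_-$ and $p_+$ according to whether $\|u\|\le1$ or $\|u\|\ge1$. My plan is to reduce this to Proposition \ref{prop_1} by introducing the Luxemburg-type gauge
\[
\|u\|_\star=\inf\Big\{\lambda>0:\varrho_0(u/\lambda)\le1\Big\},\qquad u\in\W .
\]
The functional $\varrho_0$ is convex, even, lower semicontinuous and finite on $\W$, with $\varrho_0(tu)\to0$ as $t\downarrow0$ for fixed $u$ (dominated convergence, using $\xi\in\Linf$ and the continuous trace embedding $\W\hookrightarrow\Lprand{p(\cdot)}$); moreover $\varrho_0(u)>0$ and $\varrho_0(tu)\to\infty$ as $t\uparrow\infty$ whenever $u\neq0$ — the last two properties being exactly where H$_0$ enters (via $\xi\not\equiv0$ or $\beta\not\equiv0$ and connectedness of $\Omega$). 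Hence $\|\cdot\|_\star$ is a norm on $\W$ with $\varrho_0(u/\|u\|_\star)=1$ for $u\ne0$, and repeating verbatim the computation behind Proposition \ref{prop_1}(iii) — based only on $\varrho_0(\lambda u)\le\lambda^{p_-}\varrho_0(u)$ for $\lambda\le1$, $\varrho_0(\lambda u)\le\lambda^{p_+}\varrho_0(u)$ for $\lambda\ge1$, and the two reverse inequalities — yields
\[
\|u\|_\star\le1\ \Longrightarrow\ \|u\|_\star^{p_+}\le\varrho_0(u)\le\|u\|_\star^{p_-},\qquad
\|u\|_\star\ge1\ \Longrightarrow\ \|u\|_\star^{p_-}\le\varrho_0(u)\le\|u\|_\star^{p_+}.
\]

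With these in hand, the proposition follows once I show that $\|\cdot\|_\star$ is equivalent to the norm $\|\cdot\|$ of $\W$. The inequality $\|u\|_\star\le c_2\|u\|$ is routine: one bounds $\varrho_0(u)\le C\max\{\|u\|^{p_-},\|u\|^{p_+}\}$ by controlling $\varrho_{p(\cdot)}(\nabla u)$ and $\varrho_{p(\cdot)}(u)$ through Proposition \ref{prop_1}(iii), the potential term by $\|\xi\|_\infty\varrho_{p(\cdot)}(u)$, and the boundary term by $\|\beta\|_{C(\rand)}$ times the boundary modular of $u$, which is again $\lesssim\max\{\|u\|^{p_-},\|u\|^{p_+}\}$ by Proposition \ref{prop_1}(iii) on $\rand$ together with the continuous trace embedding $\W\hookrightarrow\Lprand{p(\cdot)}$ (available since $p(x)<p_*(x)$ for all $x$); such a bound translates into $\|u\|_\star\le c_2\|u\|$ straight from the definition of the gauge.

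The main obstacle is the reverse inequality $\|u\|\le c\,\|u\|_\star$ — an anisotropic Poincar\'e-type inequality saying that $\varrho_0$ controls the full Sobolev norm, and where H$_0$ is indispensable. I would argue by contradiction: if it fails, there are $u_n\in\W$ with $\|u_n\|=1$ and $\varrho_0(u_n)\to0$. Along a subsequence $u_n\weak u$ in $\W$, and by the compact embeddings $\W\hookrightarrow\Lp{p(\cdot)}$ and $\W\hookrightarrow\Lprand{p(\cdot)}$ we get $u_n\to u$ in $\Lp{p(\cdot)}$ and in $\Lprand{p(\cdot)}$. Since $\varrho_{p(\cdot)}(\nabla u_n)\le\varrho_0(u_n)\to0$, Proposition \ref{prop_1}(iv) gives $\nabla u_n\to0$ in $\Lpvalued{p(\cdot)}$; combined with $u_n\weak u$ this forces $\nabla u=0$ and $u_n\to u$ in $\W$, so $\|u\|=1$, whence $u$ is a nonzero constant ($\Omega$ being connected). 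On the other hand $\varrho_0(u_n)\to0$ drives its remaining two nonnegative terms to zero, and passing to the limit (using $u_n\to u$ in $\Lp{p(\cdot)}$ and $\Lprand{p(\cdot)}$, and $\xi,\beta$ bounded) yields $\into\xi(x)|u|^{p(x)}\diff x=0$ and $\int_{\rand}\beta(x)|u|^{p(x)}\diff\sigma=0$. If $\xi\not\equiv0$, the first identity forces the nonzero constant $u$ to vanish on a set of positive measure, a contradiction; if instead $\beta\not\equiv0$, the second one does the same on $\rand$. Since by H$_0$ one of these two cases must occur, we reach a contradiction, and the Poincar\'e-type inequality — hence the equivalence $c_1\|u\|\le\|u\|_\star\le c_2\|u\|$ — holds.

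It then remains to assemble the pieces: feeding $c_1\|u\|\le\|u\|_\star\le c_2\|u\|$ into the two displayed modular inequalities and distinguishing $\|u\|\le1$ from $\|u\|\ge1$ — using $\|u\|^{p_+}\le\|u\|^{p_-}$ in the first case and the reverse in the second to absorb the mismatch between the thresholds $\|u\|=1$ and $\|u\|_\star=1$ into the constants — produces $\hat c,\hat c_0>0$ for which both chains of inequalities in the statement hold; together with the middle inequalities noted at the outset, this proves Proposition \ref{prop_3}. The delicate point throughout is genuinely the Poincar\'e-type step of the third paragraph, since it is the only place where the compactness of the embeddings and the full strength of H$_0$ are used; the rest is bookkeeping with the modular-to-norm comparisons.
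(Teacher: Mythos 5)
Your proof is correct, and it is also self-contained in a way the paper is not: the paper merely cites Papageorgiou--R\u adulescu--Tang \cite{14-Papageorgiou-Radulescu-Tang-2021} for Proposition~\ref{prop_3}, so it offers no argument to compare against line by line. What you present is the natural (and, to my knowledge, essentially the standard) route to this kind of two-sided modular estimate: isolate the two middle inequalities as a triviality coming from $p_-\le p(x)\le p_+$ and the nonnegativity of the three integrands; package $\varrho_0$ into a Luxemburg gauge $\|\cdot\|_\star$; observe that the modular--gauge relations of Proposition~\ref{prop_1}(iii) persist verbatim because they rest only on the two-sided homogeneity bounds $\lambda^{p_+}\varrho_0(u)\le\varrho_0(\lambda u)\le\lambda^{p_-}\varrho_0(u)$ for $\lambda\le1$ (and the reversed bounds for $\lambda\ge1$); and then reduce everything to the equivalence $c_1\|u\|\le\|u\|_\star\le c_2\|u\|$. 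The upper half of that equivalence is indeed routine once you use the continuous trace embedding (valid because $p(x)<p_*(x)$ on $\close$), and the lower half is the genuine content, a Poincar\'e-type inequality proved by the usual normalization-and-compactness contradiction, where connectedness of $\Omega$ forces the weak limit to be a nonzero constant and H$_0$ (i.e.\ $\xi\not\equiv0$ or $\beta\not\equiv0$) then produces the contradiction. You also correctly handle the bookkeeping at the threshold mismatch $\|u\|=1$ versus $\|u\|_\star=1$, absorbing it via $\|u\|^{p_+}\le\|u\|^{p_-}$ for $\|u\|\le1$ and the reverse otherwise. Two small points worth making explicit if you were to write this out fully: (1) the identity $\varrho_0(u/\|u\|_\star)=1$ for $u\ne0$ needs the one-dimensional convexity/continuity of $t\mapsto\varrho_0(tu)$ together with $\varrho_0(tu)\to\infty$ (the latter from $\varrho_0(tu)\ge t^{p_-}\varrho_0(u)$ for $t\ge1$ and $\varrho_0(u)>0$); (2) in the Poincar\'e step, passing $\into\xi|u_n|^{p(x)}\diff x\to\into\xi|u|^{p(x)}\diff x$ from strong $L^{p(\cdot)}$-convergence requires passing to a subsequence with a dominating function or invoking continuity of the Nemytskii/modular map, which you implicitly use. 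Neither affects correctness.
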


Now we are ready to state our hypotheses on the nonlinearity $f\colon\Omega\times\R\to\R$.
\begin{enumerate}
	\item[\textnormal{H$_1$}:]
		$f\colon\Omega\times\R\to\R$ is a Carath\'eodory function such that $f(x,0)=0$ for a.\,a.\,$x\in\Omega$ and
		\begin{enumerate}
			\item[\textnormal{(i)}]
				there exists $a \in \Linf$ such that
				\begin{align*}
					|f(x,s)| \leq a(x) \l [ 1+|s|^{r(x)-1}\r]
				\end{align*}
				for a.\,a.\,$x\in \Omega$, for all $s\in\R$ with $r \in \Linf$ such that $p_+<r(x)<p^*(x)$ for a.\,a.\,$x\in\Omega$;
			\item[\textnormal{(ii)}]
				if $F(x,s)=\displaystyle\int_0^s f(x,t)\diff t$, then
				\begin{align*}
					\lim_{s\to \pm\infty} \frac{F(x,s)}{|s|^{p_+}}=+\infty \quad\text{uniformly for a.\,a.\,}x\in\Omega;
				\end{align*}
			\item[\textnormal{(iii)}]
				there exists a function $q\in C(\close)$ such that
				\begin{align*}
					q(x) \in \l (\l(r_+-p_-\r)\frac{N}{p_-}, p^*(x) \r)\quad\text{ for all }x\in\close
				\end{align*}
				and
				\begin{align*}
					0<\eta \leq \liminf_{s\to +\infty} \frac{f(x,s)s-p_+F(x,s)}{|s|^{q(x)}}
				\end{align*}
				uniformly for a.\,a.\,$x\in\Omega$;
			\item[\textnormal{(iv)}]
				there exist $\eta_-<0<\eta_+$, $\tau\in C(\close)$ and $\delta>0$ such that
				\begin{align*}
					f\l(x,\eta_+\r) \leq -c_0<0<c_1\leq f\l(x,\eta_-\r)
				\end{align*}
				for a.\,a.\,$x\in\Omega$, $\tau_+<p_-$ and
				\begin{align*}
					f(x,s)s \geq c_2 |s|^{\tau(x)}
				\end{align*}
				for a.\,a.\,$x\in\Omega$, for all $s \in \R$ and for some $c_2>0$;
			\item[\textnormal{(v)}]
				there exists $\hat{\xi}>0$ such that 
				\begin{align*}
					s\to f(x,s)+\hat{\xi} |s|^{p(x)-2}s
				\end{align*}
				is nondecreasing on $[\eta_-,\eta_+]$ for a.\,a.\,$x\in \Omega$.
		\end{enumerate}
\end{enumerate}

\begin{remark}
	Hypotheses \textnormal{H$_1$}\textnormal{(ii)}, \textnormal{(iii)} imply that $f(x,\cdot)$ is $(p_+-1)$-superlinear for a.\,a.\,$x\in\Omega$. However, we do not use the Ambrosetti-Rabinowitz condition as it was done in most previous works on the subject, see Deng \cite{1-Deng-2009}, Deng-Wang \cite{2-Deng-Wang-2010} and Fan-Deng \cite{7-Fan-Deng-2009}, for example. Hypothesis \textnormal{H$_1$}\textnormal{(iv)} dictates an oscillatory behavior near zero. The following function satisfies hypotheses \textnormal{H$_1$}, but fails to fulfill the Ambrosetti-Rabinowitz condition, where we drop the $x$-dependence for simplification:
	\begin{align*}
		f(s)=
		\begin{cases}
			|s|^{\tau(x)-2}s-2|s|^{\mu(x)-2}s&\text{if } |s|\leq 1,\\[1ex]
			|s|^{p_+-2}s\ln(|s|)-|s|^{q(x)-2}s&\text{if } 1<|s|,
		\end{cases}
	\end{align*}
	with $\tau\in E_1$, $\mu,q\in \Linf$ and $q(x)\leq p_+$ for a.\,a.\,$x\in\Omega$. Note that $f$ fails to satisfy the requirements in \cite{1-Deng-2009}, \cite{2-Deng-Wang-2010} and \cite{7-Fan-Deng-2009}.
\end{remark}

\section{Constant sign solutions}\label{section_3}

We start by producing two localized constant sign solutions. To do this, we do not need the complete set of hypotheses \textnormal{H$_1$}. More precisely we do not need the asymptotic conditions as $s \to \pm\infty$.

\begin{proposition}\label{prop_4}
	If hypotheses \textnormal{H$_0$}, \textnormal{H$_1$}\textnormal{(i)}, \textnormal{(iv)}, \textnormal{(v)} hold, then problem \eqref{problem} has two constant sign solutions 
	\begin{align*}
		u_0\in\interior\quad\text{and}\quad v_0\in-\interior
	\end{align*}
	such that
	\begin{align*}
		\eta_-<v_0(x)<0<u_0(x)<\eta_+\quad\text{for all }x \in \close.
	\end{align*}
\end{proposition}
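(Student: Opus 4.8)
The plan is to construct the positive solution $u_0$ by truncating the reaction at the level $\eta_+$ and minimizing the resulting coercive $C^1$-functional; the negative solution $v_0$ is obtained symmetrically using the level $\eta_-$. First I would define the truncation
\[
\hat f_+(x,s)=
\begin{cases}
0 & \text{if } s<0,\\
f(x,s) & \text{if } 0\le s\le \eta_+,\\
f(x,\eta_+) & \text{if } \eta_+<s,
\end{cases}
\]
set $\hat F_+(x,s)=\int_0^s \hat f_+(x,t)\diff t$, and introduce $\hat\varphi_+\colon\W\to\R$ by
\[
\hat\varphi_+(u)=\gamma_{p(\cdot)}(u)-\into \hat F_+(x,u)\diff x.
\]
Since $\hat f_+$ is bounded by H$_1$(i) (the growth only matters up to the cutoff), $\hat F_+$ grows at most linearly in $u$, so Proposition~\ref{prop_3} gives that $\hat\varphi_+$ is coercive and, via the compact embedding $\W\hookrightarrow\LP$, sequentially weakly lower semicontinuous. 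Hence $\hat\varphi_+$ attains its infimum at some $u_0\in\W$, and $u_0$ solves the truncated problem $\hat\varphi_+'(u_0)=0$, i.e. $A_{p(\cdot)}(u_0)+\xi(x)|u_0|^{p(x)-2}u_0+(\text{boundary term})=\hat f_+(x,u_0)$ weakly.

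Next I would show $u_0\neq 0$ and extract the sign and localization information. Using H$_1$(iv), namely $f(x,s)s\ge c_2|s|^{\tau(x)}$ with $\tau_+<p_-$: testing with a small positive constant (or any fixed $0<t<\eta_+$) and using Proposition~\ref{prop_3}, the dominant term as $t\downarrow 0$ is $-c_2 t^{\tau_+}/\tau_+$-type against $t^{p_-}$, so $\hat\varphi_+(t)<0$ for small $t>0$; therefore $\hat\varphi_+(u_0)<0=\hat\varphi_+(0)$, so $u_0\not\equiv 0$. Testing the equation with $-u_0^-\in\W$ and using $\hat f_+(x,s)=0$ for $s<0$ gives $\varrho_0(u_0^-)=0$, hence $u_0\ge 0$. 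To get $u_0\le\eta_+$, test with $(u_0-\eta_+)^+$: on the set $\{u_0>\eta_+\}$ we have $\hat f_+(x,u_0)=f(x,\eta_+)\le -c_0<0$ by H$_1$(iv), while $\into \xi(x)|u_0|^{p(x)-2}u_0(u_0-\eta_+)^+\diff x$ and the boundary term are $\ge 0$ there (since $\xi,\beta\ge 0$ and $u_0>\eta_+>0$); combining, $\langle A_{p(\cdot)}(u_0),(u_0-\eta_+)^+\rangle\le 0$, which forces $(u_0-\eta_+)^+=0$. Thus $0\le u_0\le\eta_+$, so $\hat f_+(x,u_0)=f(x,u_0)$ and $u_0$ actually solves \eqref{problem}.

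For the strict bounds and regularity I would invoke the anisotropic regularity theory to get $u_0\in C^1(\close)$ (since $f(x,u_0)\in\Linf$ as $u_0$ is bounded), and then the anisotropic strong maximum principle. Write the equation as $-\Delta_{p(\cdot)}u_0+(\xi(x)+\hat\xi)|u_0|^{p(x)-2}u_0=\hat f_+(x,u_0)+\hat\xi u_0^{p(x)-1}$; by H$_1$(v) the right-hand side is the value at $u_0$ of a nondecreasing function on $[0,\eta_+]$, and by H$_1$(iv) it is $\ge c_2 u_0^{\tau(x)-1}+\hat\xi u_0^{p(x)-1}\ge 0$, strictly positive where $u_0>0$; the maximum principle then yields $u_0(x)>0$ for all $x\in\close$, i.e. $u_0\in\interior$. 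For the strict upper bound $u_0(x)<\eta_+$: if $u_0(x_0)=\eta_+$ at an interior point, then $x_0$ is an interior maximum, but near $x_0$ the equation forces (using $f(x,\eta_+)\le -c_0<0$ together with H$_1$(v)) a contradiction with the maximum principle applied to $\eta_+-u_0\ge 0$; a boundary-point version handles $x_0\in\partial\Omega$. Hence $0<u_0(x)<\eta_+$ on $\close$. The construction of $v_0\in-\interior$ with $\eta_-<v_0<0$ is entirely symmetric, truncating at $\eta_-$ from below and using $f(x,\eta_-)\ge c_1>0$. The main obstacle I expect is the strict bound $u_0<\eta_+$ (and not merely $\le$): one needs the correct anisotropic version of the strong maximum principle / Hopf lemma applied to the difference $\eta_+-u_0$, exploiting that $-\Delta_{p(\cdot)}(\eta_+)=0$ while $f(x,\eta_+)<0$, so $\eta_+-u_0$ is a strict supersolution of a suitable equation; justifying this carefully in the variable-exponent setting is the delicate point, everything else being a routine adaptation of the constant-exponent argument.
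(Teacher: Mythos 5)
Your proposal follows essentially the same route as the paper's proof: truncate the reaction at $\eta_+$ (resp.\ $\eta_-$), minimize the coercive, sequentially weakly lower semicontinuous truncated functional, show the infimum is negative using \textnormal{H$_1$}\textnormal{(iv)} with $\tau_+<p_-$, test the Euler equation with $-u_0^-$ and $(u_0-\eta_+)^+$ to localize $u_0\in[0,\eta_+]$, and then apply anisotropic regularity (Fan) and the anisotropic strong maximum principle (Zhang) to get $u_0\in\interior$. The delicate point you flag, strictness of $u_0<\eta_+$, is resolved in the paper exactly as you anticipate: using \textnormal{H$_1$}\textnormal{(iv)}, \textnormal{(v)} one compares $u_0$ with the constant $\eta_+$ and invokes the anisotropic strong comparison principle (Proposition 2.5 of Papageorgiou--R\u{a}dulescu--Repov\v{s}), which gives $\eta_+-u_0\in D_+$.
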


\begin{proof}
	First we show the existence of the positive solution. To this end, we introduce the Carath\'eodory function $\hat{f}_+\colon\Omega\times\R\to\R$ defined by
	\begin{align}\label{2}
		\hat{f}_+(x,s)=
		\begin{cases}
			f\l(x,s^+\r) &\text{if } s \leq \eta_+,\\[1ex]
			f\l(x,\eta_+\r) & \text{if } \eta_+<s.
		\end{cases}
	\end{align}
	We set $\hat{F}_+(x,s)=\int^s_0 \hat{f}_+(x,t)\diff t$ and consider the $C^1$-functional $\hat{\psi}_+\colon \W\to\R$ defined by
	\begin{align*}
		\hat{\psi}_+(u)=\gamma_{p(\cdot)}(u)-\into \hat{F}_+(x,u)\diff x\quad\text{for all }u\in\W.
	\end{align*}
	
	From the truncation in \eqref{2} and Proposition \ref{prop_3} it is clear that $\hat{\psi}_+\colon \W\to\R$ is coercive. Moreover, the anisotropic Sobolev embedding theorem and the compactness of the anisotropic trace map imply that $\hat{\psi}_+\colon \W\to\R$ is also sequentially weakly lower semicontinuous. So, by the Weierstra\ss-Tonelli theorem we can find $u_0\in\W$ such that
	\begin{align}\label{3}
		\hat{\psi}_+\l(u_0\r)=\min \l[\hat{\psi}_+(u)\,:\, u \in \W\r].
	\end{align}
	
	Let $u \in \interior$ and choose $t \in (0,1)$ small enough such that
	\begin{align*}
		0<tu(x)\leq \min \l\{\eta_+,\delta\r\}\quad\text{for all }x \in \close,
	\end{align*}
	see hypothesis \textnormal{H$_1$}\textnormal{(iv)}. Applying hypothesis \textnormal{H$_1$}\textnormal{(iv)} and recalling that $t\in(0,1)$, we have
	\begin{align*}
		\hat{\psi}_+(tu) \leq \frac{t^{p_-}}{p_-} \varrho_{0}(u)-\frac{t^{\tau_+}}{\tau_+}c_0\varrho_{\tau(\cdot)}(u).
	\end{align*}
	Since $\tau_+<p_-$ we can choose $t \in (0,1)$ sufficiently small such that $\hat{\psi}_+(tu)<0$. Hence, since $u_0$ is the global minimizer of $\hat{\psi}_+$, see \eqref{3}, we know that
	\begin{align*}
		\hat{\psi}_+(u_0)<0=\hat{\psi}_+(0).
	\end{align*}
	Thus, $u_0\neq 0$.
	
	From \eqref{3} we have $(\hat{\psi}_+)'(u_0)=0$ which is equivalent to
	\begin{align}\label{4}
		\l\langle \gamma_{p(\cdot)}'(u_0),h\r\rangle=\into \hat{f}_+(x,u_0)h\diff x\quad\text{for all }h \in \W.
	\end{align}
	Choosing $h=-u_0^- \in \W$ in \eqref{4} and using \eqref{2} gives
	\begin{align*}
		\varrho_{0}(u_0^-)=0.
	\end{align*}
	Hence, from Proposition \ref{prop_3}, we get $u_0 \geq 0$ with $u_0\neq 0$.
	
	Next, we choose $h=\l(u_0-\eta_+\r)^+\in\W$ in \eqref{4}. Applying the definition of the truncation in \eqref{2} and hypothesis \textnormal{H$_1$}\textnormal{(iv)} we obtain
	\begin{align*}
		\l\langle \gamma_{p(\cdot)}'(u_0),\l(u_0-\eta_+\r)^+\r\rangle
		&=\into f\l(x,\eta_+\r)\l(u_0-\eta_+\r)^+\diff x\\
		& \leq 0=\l\langle \gamma_{p(\cdot)}'(\eta_+),\l(u_0-\eta_+\r)^+\r\rangle.
	\end{align*}
	So, $u_0 \leq \eta_+$, see hypothesis \textnormal{H$_0$}.

	We have proved that 
	\begin{align}\label{5}
		u_0\in \l[0,\eta_+\r], \quad u_0\neq 0.
	\end{align}
	Then, \eqref{5}, \eqref{2} and \eqref{4} imply that $u_0$ is a positive solution of problem \eqref{problem}. From the anisotropic regularity theory, see Fan \cite[Theorem 1.3]{6-Fan-2007}, we have $u_0\in C^1(\close)_+\setminus\{0\}$. Finally the anisotropic maximum principle of Zhang \cite[Theorem 1.2]{19-Zhang-2005} implies that $u_0\in\interior$.
	
	Let $\hat{\xi}>0$ be as given in hypothesis \textnormal{H$_1$}\textnormal{(v)}. Then, by using \eqref{5} and hypothesis \textnormal{H$_1$}\textnormal{(v)} one gets
	\begin{align*}
		-\Delta_{p(\cdot)}u_0+ \l(\xi(x)+\hat{\xi}\r)u_0^{p(x)-1}
		& =f(x,u_0)+\hat{\xi}u_0^{p(x)-1}\\
		& \leq f\l(x,\eta_+\r)+\hat{\xi}\eta_+^{p(x)-1}\\
		&\leq -c_0+\hat{\xi}\eta_+^{p(x)-1}\\
		&\leq -\Delta_{p(\cdot)} \eta_++\l(\xi(x)+\hat{\xi}\r)\eta_+^{p(x)-1}\quad \text{in }\Omega.
	\end{align*}
	From Proposition 2.5 of Papageorgiou-R\u{a}dulescu-Repov\v{s} \cite{12-Papageorgiou-Radulescu-Repovs-2020} we then conclude that $\eta_+-u_0 \in D_+$.
	
	Similarly, using the Carath\'eodory function $\hat{f}_-\colon\Omega\times\R\to\R$ defined by
	\begin{align*}
		\hat{f}_-(x,s)=
		\begin{cases}
			f\l(x,\eta_-\r) &\text{if } s < \eta_-,\\[1ex]
			f\l(x,s\r) & \text{if } \eta_-\leq s
		\end{cases}
	\end{align*}
	and reasoning as above, we produce a negative solution
	\begin{align*}
		v_0 \in - \interior \quad\text{and}\quad v_0-\eta_- \in D_+.
	\end{align*}
\end{proof}

From Proposition \ref{prop_4} it follows that
\begin{align}\label{6}
	u_0 \in \sideset{}{_{C^1(\close)}} \ints \l[0,\eta_+\r]\quad\text{and}\quad
	v_0 \in \sideset{}{_{C^1(\close)}} \ints \l[\eta_-,0\r].
\end{align}

Now, using these localized constant sign solutions, we are going to show the existence of two more such solutions, one is larger than $u_0$ and the other one is smaller than $v_0$. So, we will have four smooth constant sign solutions which are ordered. For this we will use the asymptotic conditions as $s \to \pm\infty$. 

\begin{proposition}\label{prop_5}
	If hypotheses \textnormal{H$_0$}, \textnormal{H$_1$} hold, then problem \eqref{problem} has two more constant sign solutions 
	\begin{align*}
		\hat{u}\in\interior\quad\text{and}\quad \hat{v}\in-\interior
	\end{align*}
	such that
	\begin{align*}
		\hat{u}\neq u_0, \ u_0 \leq \hat{u}\quad\text{and}\quad \hat{v}\neq v_0, \ \hat{v}\leq v_0.
	\end{align*}
\end{proposition}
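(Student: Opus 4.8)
\textbf{Proof plan for Proposition \ref{prop_5}.}

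The plan is to obtain $\hat u$ via the mountain pass theorem applied to a truncated energy functional, using the first positive solution $u_0$ from Proposition \ref{prop_4} as an anchor, and then to argue symmetrically for $\hat v$. First I would introduce the truncation
\begin{align*}
	\tilde f_+(x,s)=
	\begin{cases}
		f(x,u_0(x))+\hat\xi u_0(x)^{p(x)-1}-\hat\xi u_0(x)^{p(x)-1} & \text{if } s\le u_0(x),\\
		f(x,s) & \text{if } u_0(x)<s,
	\end{cases}
\end{align*}
or more cleanly $\tilde f_+(x,s)=f(x,\max\{u_0(x),s\})$, with primitive $\tilde F_+$ and associated $C^1$-functional $\tilde\psi_+(u)=\gamma_{p(\cdot)}(u)-\int_\Omega \tilde F_+(x,u)\,dx$ on $\W$. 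A standard truncation argument using $h=(u_0-u)^+$ as test function together with $u_0\in\interior$ being a (sub)solution and Proposition \ref{prop_3} shows that every critical point of $\tilde\psi_+$ lies in $[u_0)$, hence solves \eqref{problem} and is $\ge u_0$; the anisotropic regularity theory of Fan \cite{6-Fan-2007} and the maximum principle of Zhang \cite{19-Zhang-2005} then place it in $\interior$. So it suffices to produce a nontrivial critical point of $\tilde\psi_+$ distinct from $u_0$.

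Next I would verify the mountain pass geometry and the Cerami condition for $\tilde\psi_+$. For the geometry: $u_0$ is a local minimizer of $\tilde\psi_+$ in $\W$. The natural way to see this is first to show $u_0$ is a local $C^1(\close)$-minimizer — indeed on the set $\{u\ge u_0\}$ the functionals $\tilde\psi_+$ and $\hat\psi_+$ (or a suitable modification incorporating the $[0,\eta_+]$-truncation from Proposition \ref{prop_4}) essentially agree up to a constant, and since $u_0\in\sideset{}{_{C^1(\close)}}\ints[0,\eta_+]$ by \eqref{6}, a neighborhood of $u_0$ in $C^1(\close)$ sits inside the relevant order interval where $u_0$ is a minimizer — and then invoke the equivalence of $C^1(\close)$- and $\W$-local minimizers for functionals of this type (this equivalence is the anisotropic analogue of the Brezis–Nirenberg result and is available in the variable-exponent literature, e.g. via Fan or Papageorgiou–R\u adulescu–Repov\v s). We may assume $u_0$ is not a strict local minimizer in a stronger sense and that $K_{\tilde\psi_+}$ is finite (otherwise infinitely many solutions $\ge u_0$ already finish the proof, all in $\interior$). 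For the second leg of the geometry: by the $(p_+-1)$-superlinearity encoded in H$_1$(ii), for fixed $u\in\interior$ we get $\tilde\psi_+(tu)\to-\infty$ as $t\to+\infty$, so there is a point ``beyond the mountains''. For the Cerami condition: take $\{u_n\}$ with $\tilde\psi_+(u_n)$ bounded and $(1+\|u_n\|)\tilde\psi_+'(u_n)\to0$; the standard argument — test with $u_n$ itself, combine with the energy bound, and use the quantitative superlinearity balance $f(x,s)s-p_+F(x,s)\gtrsim |s|^{q(x)}$ from H$_1$(iii) together with the interpolation/Sobolev bounds governed by the restriction $q(x)>(r_+-p_-)N/p_-$ — yields boundedness of $\{u_n\}$ in $\W$; then reflexivity, the $\Ss$-property of $A_{p(\cdot)}$ (Proposition \ref{prop_2}), and compactness of the embeddings give a strongly convergent subsequence.

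With geometry and compactness in hand, the mountain pass theorem produces $\hat u\in K_{\tilde\psi_+}$ with $\tilde\psi_+(\hat u)\ge \tilde\psi_+(u_0)+$ (mountain pass level gap), hence $\hat u\ne u_0$; by the truncation analysis $\hat u\ge u_0$, $\hat u$ solves \eqref{problem}, and $\hat u\in\interior$. The symmetric construction with
\begin{align*}
	\tilde f_-(x,s)=f(x,\min\{v_0(x),s\})
\end{align*}
and the corresponding functional $\tilde\psi_-$ — using that $v_0\in-\interior$ is a supersolution and $v_0\in\sideset{}{_{C^1(\close)}}\ints[\eta_-,0]$ — produces $\hat v\in-\interior$ with $\hat v\le v_0$ and $\hat v\ne v_0$. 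The main obstacle I anticipate is the local-minimizer step: one must arrange the truncations so that $u_0$ is genuinely a $\W$-local minimizer of the functional whose mountain pass point we want (not merely of the globally coercive $\hat\psi_+$ from Proposition \ref{prop_4}, which has its minimum possibly at a different place), and this hinges on the $C^1(\close)$-versus-$\W$ minimizer equivalence in the anisotropic setting plus the order-interval information \eqref{6}; getting the bookkeeping of these nested truncations right, and handling the alternative case of infinitely many critical points, is where the care is needed. The Cerami condition under the weakened (non–Ambrosetti–Rabinowitz) hypotheses H$_1$(ii)–(iii) is the other technically delicate point, though it follows a by-now-standard template.
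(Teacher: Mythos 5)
Your proposal is correct and follows essentially the same route as the paper: one-sided truncation at $u_0$ (your $\tilde f_+=f(x,\max\{u_0,\cdot\})$ is the paper's $g_+$), the auxiliary double truncation at $u_0$ and $\eta_+$ whose coercive functional has $u_0$ as global minimizer, passage from $C^1(\close)$- to $W^{1,p(\cdot)}(\Omega)$-local minimizer via \eqref{6}, the ``finitely many critical points'' dichotomy, the Cerami condition under H$_1$(ii)--(iii), and the mountain pass theorem, with a symmetric argument for $\hat v$. The paper implements this with the pair of functionals $\sigma_+,\hat\sigma_+$ and cites Gasi\'nski--Papageorgiou for the C-condition, exactly as you anticipate.
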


\begin{proof}
	We start with the existence of a second positive solution. To this end, we introduce the Carath\'eodory function $g_+\colon\Omega\times\R\to\R$ defined by
	\begin{align}\label{7}
		g_+(x,s)=
		\begin{cases}
			f\l(x,u_0(x)\r) &\text{if } s \leq u_0(x),\\[1ex]
			f\l(x,s\r) & \text{if } u_0(x) <s.
		\end{cases}
	\end{align}
	Moreover, we will use the truncation of $g_+(x,\cdot)$ at $\eta_+$, recall that $u_0(x)<\eta_+$ for all $x \in \Omega$. So we introduce the Carath\'eodory function $\hat{g}_+\colon\Omega\times\R\to\R$ defined by
	\begin{align}\label{8}
		\hat{g}_+(x,s)=
		\begin{cases}
			g\l(x,s\r) &\text{if } s \leq \eta_+,\\[1ex]
			g\l(x,\eta_+\r) & \text{if } \eta_+ <s.
		\end{cases}
	\end{align}
	We set $G_+(x,s)=\int^s_0 g_+(x,t)\diff t$, $\hat{G}_+(x,s)=\int^s_0 \hat{g}_+(x,t)\diff t$ and consider the $C^1$-functionals $\sigma_+, \hat{\sigma}_+\colon \W\to\R$ defined by
	\begin{align*}
		\sigma_+(u)&=\gamma_{p(\cdot)}(u)-\into G_+(x,u)\diff x\quad\text{for all }u \in \W,\\
		\hat{\sigma}_+(u)&=\gamma_{p(\cdot)}(u)-\into \hat{G}_+(x,u)\diff x\quad\text{for all }u \in \W.
	\end{align*}
	
	Using \eqref{7}, \eqref{8} and the anisotropic regularity theory, see Winkert-Zacher \cite{18-Winkert-Zacher-2012} (see also Ho-Kim-Winkert-Zhang \cite{Ho-Kim-Winkert-Zhang-2021}) and Fan \cite{6-Fan-2007}, we have
	\begin{align}\label{9}
		K_{\sigma_+} \subseteq [u_0)\cap \interior\quad\text{and}\quad K_{\hat{\sigma}_+}\subseteq [u_0,\eta_+]\cap \interior.
	\end{align}
	Moreover, it is clear that from \eqref{7} and \eqref{8} we know that
	\begin{align}\label{10}
		\sigma_+\big|_{[0,\eta_+]}=\hat{\sigma}_+\big|_{[0,\eta_+]}.
	\end{align}
	From \eqref{9} we see that we can always assume that
	\begin{align}\label{11}
		K_{\hat{\sigma}_+}=\{u_0\}.
	\end{align}
	Otherwise, we would infer from \eqref{9} and \eqref{8} that we already have a second positive smooth solution of \eqref{problem} larger than $u_0$ and so we are done.
	
	From \eqref{8} and Proposition \ref{prop_3} it is clear that $\hat{\sigma}_+\colon \W\to\R$ is coercive. Also it is sequentially weakly lower semicontinuous. Hence, its global minimizer exists, that is, we find  $\tilde{u}_0\in\W$ such that
	\begin{align*}
		\hat{\sigma}_+\l(\tilde{u}_0\r)=\min \l[\hat{\sigma}_+(u)\,:\, u \in \W\r].
	\end{align*}
	Because of \eqref{11} we conclude that $\tilde{u}_0=u_0$.
	
	From \eqref{10} and \eqref{6} it follows that $u_0$ is a local $C^1(\close)$-minimizer of $\sigma_+$. Then we know that 
	\begin{align}\label{12}
		u_0 \text{ is a local $\W$-minimizer of }\sigma_+,
	\end{align}
	see Fan \cite{Fan-2007} and Gasi\'nski-Papageorgiou \cite{8-Gasinski-Papageorgiou-2011}.
	
	Note that from \eqref{9} and \eqref{7} we see that we may assume that
	\begin{align}\label{13}
		K_{\sigma_+} \text{ is finite}.
	\end{align}
	Otherwise we already have an infinity of positive smooth solutions of \eqref{problem} all larger than $u_0$ and so we are done.
	
	From \eqref{12}, \eqref{13} and Theorem 5.7.6 of Papageorgiou-R\u{a}dulescu-Repov\v{s} \cite[p.\,449]{13-Papageorgiou-Radulescu-Repovs-2019} we can find $\rho \in (0,1)$ small enough such that
	\begin{align}\label{14}
		\sigma_+(u_0)<\inf \l[\sigma_+(u)\,:\,\|u-u_0\|=\rho\r]=m_+.
	\end{align}

	On account of hypothesis \textnormal{H$_1$}\textnormal{(ii)}, if $u \in \interior$, we have
	\begin{align}\label{15}
		\sigma_+(tu)\to -\infty \quad\text{as }t\to +\infty.
	\end{align}

	Moreover, hypotheses \textnormal{H$_1$}\textnormal{(ii)}, \textnormal{(iii)} and Proposition 4.1 of Gasi\'nski-Papageorgiou \cite{8-Gasinski-Papageorgiou-2011} imply that
	\begin{align}\label{16}
		\sigma_+ \text{ satisfies the $C$-condition}.
	\end{align}

	From \eqref{14}, \eqref{15} and \eqref{16} we see that we can use the mountain pass theorem and find $\hat{u} \in \W$ such that
	\begin{align}\label{17}
		\hat{u} \in K_{\sigma_+}\subseteq [u_0)\cap \interior\quad\text{and}\quad\sigma_+(u_0)<m_+\leq \sigma_+\l(\hat{u}\r),
	\end{align}
	see \eqref{9} and \eqref{14}. From \eqref{17} and \eqref{7} it follows that $\hat{u}\in\interior$ is the second positive smooth solution of problem \eqref{problem} with $u_0\leq \hat{u}$ and $\hat{u}\neq u_0$.
	
	In a similar way, starting with the Carath\'eodory function
	\begin{align*}
		g_-(x,s)=
		\begin{cases}
			f\l(x,s\r) &\text{if } s < v_0(x),\\[1ex]
			f\l(x,v_0(x)\r) & \text{if } v_0(x) \leq s
		\end{cases}
	\end{align*}
	and continuing as above, we can produce a second negative smooth solution $\hat{v} \in -\interior$ with $\hat{v} \leq v_0$ and $\hat{v} \neq v_0$.
\end{proof}

In fact we will show that problem \eqref{problem} admits extremal constant sign solutions, that is, there is a smallest positive solution $u_*\in\interior$ and a greatest negative solution $v_*\in -\interior$. In Section \ref{section_4} we will use these extremal constant sign solutions in order to prove the existence of a sign-changing solution, also called nodal solution.

Hypotheses \textnormal{H$_1$}\textnormal{(i)}, \textnormal{(iv)} imply that
\begin{align}\label{18}
	f(x,s)s \geq c_2 |s|^{\tau(x)-1}-c_3|s|^{r(x)-1}
\end{align}
for a.\,a.\,$x\in\Omega$, for all $s\in\R$ and for some $c_3>0$. This unilateral growth condition on $f(x,\cdot)$ leads to the following auxiliary Robin problem
\begin{equation}\label{19}
	\begin{aligned}
		-\Delta_{p(\cdot)}u+\xi(x)|u|^{p(x)-2}u &= c_2|u|^{\tau(x)-2}u-c_3|u|^{r(x)-2}u&&\text{in } \Omega, \\
		|\nabla u|^{p(x)-2} \nabla u \cdot \nu+\beta(x)|u|^{p(x)-2}u&=0&&\text{on }\partial\Omega.
	\end{aligned}
\end{equation}

For this problem we have the following existence and uniqueness result.

\begin{proposition}\label{prop_6}
	If hypotheses \textnormal{H$_0$} hold, then problem \eqref{19} admits a unique positive solution $\overline{u}\in\interior$ and since problem \eqref{19} is odd, $\overline{v}=-\overline{u}\in -\interior$ is the unique negative solution of \eqref{19}.
\end{proposition}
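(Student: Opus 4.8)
The plan is to obtain the positive solution $\overline{u}$ via the direct method applied to the energy functional associated with \eqref{19}, and then to establish uniqueness by a convexity/monotonicity argument. Define $\vartheta\colon\W\to\R$ by
\[
\vartheta(u)=\gamma_{p(\cdot)}(u)+\into\frac{c_3}{r(x)}|u|^{r(x)}\diff x-\into\frac{c_2}{\tau(x)}|u|^{\tau(x)}\diff x .
\]
Since $\tau_+<p_-\le p_+<r_-$, Proposition \ref{prop_3} together with Proposition \ref{prop_1} shows that the $\varrho_{r(\cdot)}$-term dominates for large $\|u\|$, so $\vartheta$ is coercive; the anisotropic Sobolev embedding and compactness of the trace map give sequential weak lower semicontinuity. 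Hence $\vartheta$ attains its infimum at some $\overline{u}\in\W$. To see $\overline{u}\ne 0$, I would evaluate $\vartheta$ at $t\hat u$ for $\hat u\in\interior$ and $t\downarrow 0$: because $\tau_+<p_-$, the term $-\,t^{\tau_+}(c_2/\tau_+)\varrho_{\tau(\cdot)}(\hat u)$ beats the $t^{p_-}$-terms, so $\vartheta(t\hat u)<0=\vartheta(0)$ for small $t$. Thus $\overline{u}\ne0$ and $\vartheta'(\overline{u})=0$, i.e. $\overline{u}$ solves a problem with right-hand side $c_2|\overline u|^{\tau(x)-2}\overline u-c_3|\overline u|^{r(x)-2}\overline u$. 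Testing $\vartheta'(\overline{u})=0$ with $h=-\overline{u}^{\,-}$ and using $\varrho_0(\overline{u}^{\,-})=0$ (Proposition \ref{prop_3}) yields $\overline u\ge0$, $\overline u\ne 0$. Then the anisotropic regularity theory (Fan \cite{6-Fan-2007}, Winkert-Zacher \cite{18-Winkert-Zacher-2012}) gives $\overline u\in C^1(\close)_+\setminus\{0\}$, and since $-\Delta_{p(\cdot)}\overline u+(\xi(x)+c_3\overline u^{\,r(x)-p(x)})\overline u^{\,p(x)-1}\ge0$, the anisotropic maximum principle of Zhang \cite{19-Zhang-2005} forces $\overline u\in\interior$.

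For uniqueness, the standard device is the functional $j\colon L^1(\Omega)\to\overline\R$ defined on $\{u\ge0:\ u^{1/p(\cdot)}\in\W\}$ by $j(u)=\gamma_{p(\cdot)}(u^{1/p(\cdot)})$, which Díaz–Saá–type arguments show to be convex in $u$ — this is the $p(x)$-Laplacian analogue used e.g. in Papageorgiou-R\u adulescu-Repov\v s. If $\overline u_1,\overline u_2\in\interior$ are two positive solutions, one computes the Gâteaux derivative of $j$ at $\overline u_i^{\,p(\cdot)}$ in the direction $\overline u_1^{\,p(\cdot)}-\overline u_2^{\,p(\cdot)}$, obtaining
\[
\frac{d}{dt}j\big(\text{line}\big)\Big|_{0}=\into\Big(\frac{-\Delta_{p(\cdot)}\overline u_i+\xi(x)\overline u_i^{\,p(x)-1}}{\overline u_i^{\,p(x)-1}}\Big)\big(\overline u_1^{\,p(x)}-\overline u_2^{\,p(x)}\big)\diff x ,
\]
and substituting the equation, the bracket equals $c_2\overline u_i^{\,\tau(x)-p(x)}-c_3\overline u_i^{\,r(x)-p(x)}$. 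Subtracting the two resulting expressions and using convexity of $j$ (so the difference of derivatives has a sign), one is left with
\[
0\le\into\big(c_2(\overline u_1^{\,\tau(x)-p(x)}-\overline u_2^{\,\tau(x)-p(x)})-c_3(\overline u_1^{\,r(x)-p(x)}-\overline u_2^{\,r(x)-p(x)})\big)\big(\overline u_1^{\,p(x)}-\overline u_2^{\,p(x)}\big)\diff x .
\]
Since $\tau(x)-p(x)<0$ and $r(x)-p(x)>0$, each factor in the integrand has the sign opposite to $\overline u_1^{\,p(x)}-\overline u_2^{\,p(x)}$, making the integrand $\le0$ pointwise; hence it vanishes a.e., which forces $\overline u_1=\overline u_2$. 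Finally, the oddness of the reaction $s\mapsto c_2|s|^{\tau(x)-2}s-c_3|s|^{r(x)-2}s$ and of the operator immediately gives that $\overline v:=-\overline u\in-\interior$ is the unique negative solution.

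The main obstacle is the uniqueness part: making the Díaz–Saá convexity argument rigorous in the $p(x)$-setting requires care with the domain of $j$, the admissibility of the test directions $\overline u_1^{\,p(\cdot)}-\overline u_2^{\,p(\cdot)}$ (one typically needs $\overline u_1/\overline u_2,\ \overline u_2/\overline u_1\in\Linf$, which is exactly what $\overline u_i\in\interior$ provides), and the variable-exponent chain rule for $\gamma_{p(\cdot)}(u^{1/p(\cdot)})$. Alternatively, if a clean convexity lemma is unavailable, uniqueness can instead be extracted by a direct comparison: assume $\overline u_1\not\le\overline u_2$, test the difference of the two equations with $(\overline u_1-\overline u_2)^+$ using the strict monotonicity of $A_{p(\cdot)}$ (Proposition \ref{prop_2}) together with the fact that $t\mapsto c_2 t^{\tau(x)-1}-c_3 t^{r(x)-1}+\xi(x)t^{p(x)-1}$ — or rather a suitably shifted version — behaves well; the strictly sublinear $\tau$-term is what kills the positive part. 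I would present whichever of these two routes is shorter, but I expect the Díaz–Saá approach to be the intended one.
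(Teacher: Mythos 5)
Your overall route is the same as the paper's (direct method plus anisotropic regularity and Zhang's maximum principle for existence; a D\'iaz--Sa\'a/Tak\'a\v{c}--Giacomoni convexity argument for uniqueness), but two steps do not work as written. First, because you put $|u|$ rather than $u^+$ in the $\tau$- and $r$-terms, your functional $\vartheta$ is even, so a global minimizer has no reason to be nonnegative, and testing $\vartheta'(\overline u)=0$ with $h=-\overline u^{\,-}$ does \emph{not} give $\varrho_0(\overline u^{\,-})=0$: it gives $\varrho_0(\overline u^{\,-})=\into\bigl[c_2(\overline u^{\,-})^{\tau(x)}-c_3(\overline u^{\,-})^{r(x)}\bigr]\diff x$, whose right-hand side need not vanish. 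The paper avoids this by truncating at zero (using $(u^+)^{\tau(x)}$, $(u^+)^{r(x)}$), and your version can be repaired by noting $\vartheta(|u|)=\vartheta(u)$ and replacing the minimizer by its modulus. Also your coercivity mechanism is off: $\varrho_{r(\cdot)}(u)$ cannot dominate the $\W$-norm (the embedding goes the other way); coercivity comes from $\gamma_{p(\cdot)}(u)\ge\hat c\,\|u\|^{p_-}$ together with $\tau_+<p_-$, the $r$-term being simply dropped as nonnegative --- which is exactly the paper's argument.

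The more serious gap is in the uniqueness step: you build $j$ from the \emph{variable} power $u^{1/p(\cdot)}$, whereas the convexity theorem of Tak\'a\v{c}--Giacomoni invoked here (and the paper) uses the \emph{constant} power $u^{1/p_-}$. Pointwise, $|\nabla u^{1/s}|^{p(x)}=s^{-p(x)}u^{-p(x)(s-1)/s}|\nabla u|^{p(x)}$ is convex in $(u,\nabla u)$ only when $s\le p(x)$, so $s=p_-$ is the correct uniform choice; moreover, with $s=p(x)$ the chain rule produces extra terms involving $\nabla p$ and $\ln u$, so $\nabla\bigl(u^{1/p(x)}\bigr)\neq\frac{1}{p(x)}u^{1/p(x)-1}\nabla u$ and your formula for the G\^ateaux derivative (bracket equal to $c_2\overline u_i^{\,\tau(x)-p(x)}-c_3\overline u_i^{\,r(x)-p(x)}$) is not justified. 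With $s=p_-$ the admissible direction is $\overline u_1^{\,p_-}-\overline u_2^{\,p_-}$, the brackets become $c_2\overline u_i^{\,\tau(x)-p_-}-c_3\overline u_i^{\,r(x)-p_-}$, and since $\tau(x)\le\tau_+<p_-<r(x)$ your sign argument then goes through exactly as in the paper. Finally, your fallback of testing the difference of the equations with $(\overline u_1-\overline u_2)^+$ fails: $s\mapsto c_2 s^{\tau(x)-1}$ is \emph{increasing}, so that term has the wrong sign; the whole point of the D\'iaz--Sa\'a device is that after dividing by $s^{p_--1}$ the $\tau$-term becomes strictly decreasing.
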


\begin{proof}
	First we show the existence of a positive smooth solution for problem \eqref{19}. To this end, we introduce the $C^1$-functional $\vartheta_+\colon\W\to\R$ defined by
	\begin{align*}
		\vartheta_+(u)=\gamma_{p(\cdot)}(u)+\into \frac{c_3}{r(x)} \l(u^+\r)^{r(x)}\diff x-\into \frac{c_2}{\tau(x)} \l(u^+\r)^{\tau(x)}\diff x
	\end{align*}
	for all $u\in \W$.
	
	Using Proposition \ref{prop_3} we have for all $\|u\| \geq 1$
	\begin{align*}
		\vartheta_+(u) \geq \hat{c} \|u\|^{p_-} -\frac{c_2}{\tau_-}\varrho_{\tau(\cdot)}(u)\geq \hat{c} \|u\|^{p_-}-c_4 \|u\|^{\tau_+}
	\end{align*}
	for some $c_4>0$, see also Proposition \ref{prop_1} and recall that $\W \hookrightarrow \Lp{\tau(\cdot)}$.
	
	Since $\tau_+<p_-$, see hypothesis \textnormal{H$_1$}\textnormal{(iv)}, we infer that $\vartheta_+\colon\W\to\R$ is coercive. Since it is also sequentially weakly lover semicontinuous, we can find $\tilde{u}\in\W$ such that
	\begin{align}\label{20}
		\vartheta_+\l(\tilde{u}\r)=\min \l[\vartheta_+(u)\,:\, u \in \W\r].
	\end{align}

	Since $\tau_+<p_-\leq p(x)<r(x)$ for all $x\in \close$, if $u\in\interior$ and $t\in (0,1)$ is sufficiently small, we have $\vartheta_+(tu)<0$. Then, due to \eqref{20}, it holds $\vartheta_+\l(\tilde{u}\r)<0=\vartheta_+(0)$ and so, $\tilde{u}\neq 0$.
	
	From \eqref{20} we know that $\vartheta_+'(\tilde{u})=0$ and so
	\begin{align}\label{21}
		\l\langle \gamma_{p(\cdot)}'\l(\tilde{u}\r),h\r\rangle =\into c_2 \l(\tilde{u}^+\r)^{\tau(x)-1}h\diff x-\into c_3 \l(\tilde{u}^+\r)^{r(x)-1}h\diff x
	\end{align}
	for all $h\in \W$. Choosing $h=-\tilde{u}^-\in\W$ in \eqref{21}, we get $\varrho_{0}(\tilde{u}^-)=0$ and so, $\tilde{u}\geq 0$ with $\tilde{u}\neq 0$, see Proposition \ref{prop_3}.
	
	Therefore, $\tilde{u}$ is a positive solution of \eqref{19} and as before, using the anisotropic regularity theory, see Winkert-Zacher \cite{18-Winkert-Zacher-2012} and Fan \cite{6-Fan-2007}, and the anisotropic maximum principle, see Zhang \cite{19-Zhang-2005}, we infer that $\tilde{u}\in\interior$.
	
	Next we show that this positive solution of \eqref{19} is unique. For this purpose, we introduce the integral functional $j_+\colon \Lp{1}\to \overline{R}=\R\cup \{+\infty\}$ defined by
	\begin{align*}
		j_+(u)=
		\begin{cases}
			\gamma_{p(\cdot)}\l(\nabla u^{\frac{1}{p_-}}\r)&\text{if }u\geq 0, u^{\frac{1}{p_-}} \in \W,\\[1ex]
			+\infty &\text{otherwise}.
		\end{cases}
	\end{align*}
	
	From Theorem 2.2 of Tak\'{a}\v{c}-Giacomoni \cite{16-Takac-Giacomoni-2020}, see also D\'{\i}az-Sa\'{a} \cite{4-Diaz-Saa-1987} for the isotropic case, we have that $j_+\colon \Lp{1}\to \R\cup \{+\infty\}$ is convex.
	
	Suppose that $\tilde{y}\in \W$ is another positive solution of problem \eqref{19}. As before, we have $\tilde{y}\in\interior$. Using Proposition 4.1.22 of Papageorgiou-R\u{a}dulescu-Repov\v{s} \cite[p.\,274]{13-Papageorgiou-Radulescu-Repovs-2019}, we see that
	\begin{align}\label{22}
		\frac{\tilde{u}}{\tilde{y}}\in\Linf \quad\text{and}\quad \frac{\tilde{y}}{\tilde{u}}\in \Linf.
	\end{align}

	Let $h=\tilde{u}^{p_-}-\tilde{y}^{p_-}\in \W$. Then, from \eqref{22} and if $|t|<1$ is small enough, we conclude that
	\begin{align*}
		\tilde{u}^{p_-}+th \in \dom j_+\quad\text{and}\quad \tilde{y}^{p_-}+th \in \dom j_+.
	\end{align*}
	Hence, on account of the convexity of $j_+$, we infer that $j_+$ is Gateaux differentiable at $\tilde{u}^{p_-}$ and at $\tilde{y}^{p_-}$ in the direction $h$. Using Green's identity, see Tak\'{a}\v{c}-Giacomoni \cite[Remark 2.6]{16-Takac-Giacomoni-2020}, and the chain rule, we obtain
	\begin{align*}
		j_+'\l(\tilde{u}^{p_-}\r)(h)
		&= \frac{1}{p_-} \into \frac{-\Delta_{p(\cdot)}\tilde{u}+\xi(x)\tilde{u}^{p(x)-1}}{\tilde{u}^{p_--1}}h\diff x\\
		& = \frac{1}{p_-} \into \l[\frac{c_2}{\tilde{u}^{p_--\tau(x)}}-c_3 \tilde{u}^{r(x)-p_-}\r]h\diff x
	\end{align*}
	and
	\begin{align*}
		j_+'\l(\tilde{y}^{p_-}\r)(h)
		&= \frac{1}{p_-} \into \frac{-\Delta_{p(\cdot)}\tilde{y}+\xi(x)\tilde{y}^{p(x)-1}}{\tilde{y}^{p_--1}}h\diff x\\
		& = \frac{1}{p_-} \into \l[\frac{c_2}{\tilde{y}^{p_--\tau(x)}}-c_3 \tilde{y}^{r(x)-p_-}\r]h\diff x.
	\end{align*}

	The convexity of $j_+$ implies the monotonicity of $j_+'$. Therefore, we have
	\begin{align*}
		0 & \leq \into c_2 \l[\frac{1}{\tilde{u}^{p_--\tau(x)}}-\frac{1}{\tilde{y}^{p_--\tau(x)}}\r]\l(\tilde{u}^{p_-}-\tilde{y}^{p_-}\r)\diff x\\
		&\quad + \into c_3 \l[\tilde{y}^{r(x)-p_-}-\tilde{u}^{r(x)-p_-}\r]\l(\tilde{u}^{p_-}-\tilde{y}^{p_-}\r)\diff x.
	\end{align*}
	Recall that $\tau_+<p_-<\tau(x)$ for all $x \in \close$, we conclude that $\tilde{u}=\tilde{y}$. This proves the uniqueness of the positive solution $\tilde{u}\in\interior$ for problem \eqref{19}.
	
	Since the problem is odd, $\tilde{v}=-\tilde{u} \in-\interior$ is the unique negative solution of \eqref{19}.
\end{proof}

We introduce the following two sets
\begin{align*}
	\mathcal{S}_+&=\left\{u: u\text{ is a positive solution of problem \eqref{problem}}\right\},\\
	\mathcal{S}_-&=\left\{u: u\text{ is a negative solution of problem \eqref{problem}}\right\}.
\end{align*}

We have already seen in Proposition \ref{prop_4} that 
\begin{align*}
	\emptyset \neq \mathcal{S}_+\subseteq \interior \quad\text{and}\quad 
	\emptyset\neq \mathcal{S}_- \subseteq -\interior.
\end{align*}

The solutions $\tilde{u}, \tilde{v}$ of \eqref{19} provide bounds for the sets $\mathcal{S}_+$ and $\mathcal{S}_-$, where $\tilde{u}\in\interior$ is a lower bound for $\mathcal{S}_+$ and $\tilde{v}\in-\interior$ is an upper bound for $\mathcal{S}_-$.

\begin{proposition}\label{prop_7}
	If hypotheses \textnormal{H$_0$}, \textnormal{H$_1$} hold, then $\tilde{u}\leq u$ for all $u \in \mathcal{S}_+$ and $v\leq \tilde{v}$ for all $v \in \mathcal{S}_-$.
\end{proposition}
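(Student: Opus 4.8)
The goal is to show $\tilde u \le u$ for every $u \in \mathcal{S}_+$ (the argument for $\mathcal{S}_-$ being symmetric by oddness). Fix $u \in \mathcal{S}_+$, so $u \in \interior$ is a positive solution of \eqref{problem}. The strategy is the standard truncation-and-minimization device: I would build a functional whose minimizer, on one hand, must be the unique positive solution $\tilde u$ of the auxiliary problem \eqref{19}, and on the other hand is forced by a pointwise comparison argument to lie below $u$.

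First I would introduce the Carath\'eodory truncation
\begin{align*}
	e_+(x,s)=
	\begin{cases}
		0 & \text{if } s<0,\\
		c_2 s^{\tau(x)-1}-c_3 s^{r(x)-1} & \text{if } 0\le s\le u(x),\\
		c_2 u(x)^{\tau(x)-1}-c_3 u(x)^{r(x)-1} & \text{if } u(x)<s,
	\end{cases}
\end{align*}
set $E_+(x,s)=\int_0^s e_+(x,t)\diff t$, and consider $\mu_+\colon\W\to\R$, $\mu_+(w)=\gamma_{p(\cdot)}(w)-\into E_+(x,w)\diff x$. By Proposition \ref{prop_3} and the boundedness of the truncated term, $\mu_+$ is coercive and, via the compact anisotropic embeddings, sequentially weakly lower semicontinuous; so it has a global minimizer $w_0\in\W$. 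Testing with $-w_0^-$ and with $(w_0-u)^+$ (here one uses that $u$ solves \eqref{problem} together with the unilateral bound \eqref{18}, exactly as in the proof of Proposition \ref{prop_5}) one shows $w_0\in[0,u]$, and choosing $w=t\hat w$ with $\hat w\in\interior$ and $t$ small gives $\mu_+(w_0)<0=\mu_+(0)$, so $w_0\ne0$. Then $w_0$ is a nontrivial solution of the auxiliary problem \eqref{19}, and by the anisotropic regularity theory (Winkert-Zacher \cite{18-Winkert-Zacher-2012}, Fan \cite{6-Fan-2007}) and the maximum principle (Zhang \cite{19-Zhang-2005}), $w_0\in\interior$.

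By the uniqueness statement in Proposition \ref{prop_6}, $w_0=\tilde u$. Since $w_0\in[0,u]$, this yields $\tilde u\le u$, and as $u\in\mathcal S_+$ was arbitrary the first claim follows. For the negative part, the problem \eqref{19} is odd and $\tilde v=-\tilde u$, so running the mirror-image truncation (cutting below $v$ and above $0$) produces $\tilde v$ as the maximal solution dominating every $v\in\mathcal S_-$ from above, i.e.\ $v\le\tilde v$.

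**Main obstacle.** The only delicate point is the pointwise comparison $w_0\le u$. Testing $\mu_+'(w_0)=0$ with $h=(w_0-u)^+$ gives
\begin{align*}
	\l\langle\gamma_{p(\cdot)}'(w_0),(w_0-u)^+\r\rangle=\into\l[c_2 u^{\tau(x)-1}-c_3 u^{r(x)-1}\r](w_0-u)^+\diff x,
\end{align*}
and one must compare the right-hand side with $\l\langle\gamma_{p(\cdot)}'(u),(w_0-u)^+\r\rangle=\into f(x,u)(w_0-u)^+\diff x$; the inequality $c_2 u^{\tau(x)-1}-c_3 u^{r(x)-1}\le f(x,u)$ for a.a.\ $x$ is precisely \eqref{18} evaluated at $s=u(x)>0$. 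Subtracting and using the strict monotonicity of $A_{p(\cdot)}$ (Proposition \ref{prop_2}) together with the monotonicity of the potential and boundary terms forces $(w_0-u)^+\equiv0$. This is routine given \eqref{18}, but it is the step where the specific structure of the auxiliary problem is used, so I would write it out carefully.
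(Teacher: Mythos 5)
Your proof is correct and follows essentially the same route as the paper: truncate the auxiliary reaction at $u$ to get a coercive, sequentially weakly lower semicontinuous functional, extract a global minimizer, pin it in $[0,u]\setminus\{0\}$ by the two standard test-function choices $-w_0^-$ and $(w_0-u)^+$ (the latter using \eqref{18} and that $u$ solves \eqref{problem}), and then invoke the uniqueness statement of Proposition \ref{prop_6} to identify the minimizer with $\tilde u$. The only cosmetic difference is that you spell out the intermediate step that $w_0\in\interior$ via regularity and the strong maximum principle before applying uniqueness, which the paper leaves implicit.
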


\begin{proof}
	Let $u \in \mathcal{S}_+$ and consider the Carath\'eodory function $k_+\colon \Omega\times\R\to\R$ defined by
	\begin{align}\label{23}
		k_+(x,s)=
		\begin{cases}
			c_2 \l(s^+\r)^{\tau(x)-1}-c_3 \l(s^+\r)^{r(x)-1} &\text{if } s \leq u(x),\\[1ex]
			c_2 \l(u(x)\r)^{\tau(x)-1}-c_3 \l(u(x)\r)^{r(x)-1}  & \text{if } u(x) <s.
		\end{cases}
	\end{align}
	We set $K_+(x,s)=\int^s_0k_+(x,t)\diff t$ and consider the $C^1$-functional $\hat{\vartheta}_+\colon\W\to\R$ defined by
	\begin{align*}
		\hat{\vartheta}_+(u)=\gamma_{p(\cdot)}(u)-\into K_+(x,u)\diff x\quad\text{for all }u\in\W.
	\end{align*}
	Evidently, $\hat{\vartheta}_+\colon\W\to\R$ is coercive, see \eqref{23} and Proposition \ref{prop_3}, and sequentially weakly lower semicontinuous. Hence, we find $\tilde{u}_*\in\W$ such that
	\begin{align}\label{24}
		\hat{\vartheta}_+\l(\tilde{u}_*\r)=\min \l[\hat{\vartheta}_+(u)\,:\, u \in \W\r].
	\end{align}
	
	As before, see the proof of Proposition \ref{prop_6}, if $w \in \interior$ and $t \in (0,1)$ sufficiently small, at least so that $tw\leq u$ we have $\hat{\vartheta}_+(tw)<0$, recall that $u \in \interior$ and use Proposition 4.1.22 of Papageorgiou-R\u{a}dulescu-Repov\v{s} \cite[p.\,274]{13-Papageorgiou-Radulescu-Repovs-2019}. Then, due to \eqref{24}, it follows that $\hat{\vartheta}_+(\tilde{u}_*)<0=\hat{\vartheta}_+(0)$. Hence, $\tilde{u}_*\neq 0$.
	
	From \eqref{24} we have $(\hat{\vartheta}_+)'(\tilde{u}_*)=0$, that is,
	\begin{align}\label{25}
		\l\langle \gamma_{p(\cdot)}'\l(\tilde{u}_*\r),h\r \rangle=\into k_+\l(x,\tilde{u}_*\r)h\diff x\quad\text{for all }h\in \W.
	\end{align}

	First we choose $h=-\tilde{u}^-\in\W$ and obtain $\tilde{u}_*\geq 0$ with $\tilde{u}_*\neq 0$, see \eqref{23}. Next, we take $h=\l(\tilde{u}_*-u\r)^+\in\W$. Then, from \eqref{23}, \eqref{18} and the fact that $u \in\mathcal{S}_+$, we obtain
	\begin{align*}
		\l\langle \gamma_{p(\cdot)}'\l(\tilde{u}_*\r),\l(\tilde{u}_*-u\r)^+\r \rangle
		&=\into \l[c_2 u^{\tau(x)-1}-c_3u^{r(x)-1}\r]\l(\tilde{u}_*-u\r)^+\diff x\\
		&\leq \into f(x,u)\l(\tilde{u}_*-u\r)^+\diff x\\
		& =\l\langle \gamma_{p(\cdot)}'\l(u\r),\l(\tilde{u}_*-u\r)^+\r \rangle.
	\end{align*}
	Hence, $\tilde{u}_*\leq u$. So, we have proved
	\begin{align}\label{26}
		\tilde{u}_* \in [0,u], \ \tilde{u}_*\neq 0.
	\end{align}

	From \eqref{26}, \eqref{23}, \eqref{25} and Proposition \ref{prop_6}, it follows that $\tilde{u}_*=\tilde{u}$. Thus, see \eqref{26}, $\tilde{u}\leq u$ for all $u \in \mathcal{S}_+$.
	
	Similarly, we show that $v\leq \tilde{v}$ for all $v\in\mathcal{S}_-$.
\end{proof}

Now we are ready to produce extremal constant sign solutions for problem \eqref{problem}.

\begin{proposition}\label{prop_8}
	If hypotheses \textnormal{H$_0$}, \textnormal{H$_1$} hold, then problem \eqref{problem} has a smallest positive solution $u_* \in \interior$ and a greatest negative solution $v_*\in -\interior$. 
\end{proposition}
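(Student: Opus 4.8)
The plan is to obtain $u_*$ as the infimum of the downward-directed solution set $\mathcal{S}_+$, using the order structure together with the a priori lower bound $\tilde u\le u$ for all $u\in\mathcal{S}_+$ furnished by Proposition~\ref{prop_7}. First I would verify that $\mathcal{S}_+$ is downward directed, i.e.\ given $u_1,u_2\in\mathcal{S}_+$ there exists $u\in\mathcal{S}_+$ with $u\le u_1$ and $u\le u_2$. To do this, set $m=\min\{u_1,u_2\}\in\Wp{p(\cdot)}$ and introduce the truncation of $f$ at $m$ from above (keeping $f(x,s^+)$ below zero), build the associated coercive, sequentially weakly lower semicontinuous energy functional $\tau_+$ on $\Wp{p(\cdot)}$, and take a global minimizer $w$. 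A standard argument (testing $\tau_+'(w)=0$ with $-w^-$ and with $(w-m)^+$, exploiting that $u_1,u_2$ are supersolutions of the truncated problem since they solve \eqref{problem}, cf.\ the weak comparison of Proposition~2.5 of \cite{12-Papageorgiou-Radulescu-Repovs-2020}) shows $w\in[\,0,m\,]$ and $w\neq 0$; then $w\in\mathcal{S}_+$ by \eqref{2}-type unfolding and the anisotropic regularity theory (Fan \cite{6-Fan-2007}, Winkert-Zacher \cite{18-Winkert-Zacher-2012}) and the maximum principle (Zhang \cite{19-Zhang-2005}), so $w\in\interior$.

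Next I would extract a monotone sequence. By \cite[Lemma 3.10]{13-Papageorgiou-Radulescu-Repovs-2019} (or the standard fact that a downward-directed set in an ordered Banach space with order-continuous structure admits a decreasing sequence converging to its infimum), choose $\{u_n\}_{n\in\N}\subseteq\mathcal{S}_+$ decreasing with $\inf_n u_n=\inf\mathcal{S}_+$ pointwise. Each $u_n$ solves \eqref{problem}, so
\begin{align*}
	\l\langle A_{p(\cdot)}(u_n),h\r\rangle+\into \xi(x)u_n^{p(x)-1}h\diff x+\int_{\partial\Omega}\beta(x)u_n^{p(x)-1}h\diff\sigma=\into f(x,u_n)h\diff x
\end{align*}
for all $h\in\Wp{p(\cdot)}$. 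Testing with $h=u_n$ and using $0\le u_n\le u_1$, hypothesis \textnormal{H$_1$}\textnormal{(i)}, Proposition~\ref{prop_3} and Proposition~\ref{prop_1}, one gets $\{u_n\}$ bounded in $\Wp{p(\cdot)}$. Passing to a subsequence, $u_n\weak u_*$ in $\Wp{p(\cdot)}$ and $u_n\to u_*$ in $\LP{r(\cdot)}$ and in $\Lprand{p(\cdot)}$ by the compact anisotropic embeddings. Testing the equation with $h=u_n-u_*$, the right-hand side and the lower-order terms tend to zero, whence $\limsup_n\l\langle A_{p(\cdot)}(u_n),u_n-u_*\r\rangle\le 0$; the $\Ss$-property of $A_{p(\cdot)}$ (Proposition~\ref{prop_2}) gives $u_n\to u_*$ in $\Wp{p(\cdot)}$. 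Passing to the limit in the weak formulation, $u_*$ is a nonnegative solution of \eqref{problem}.

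It remains to rule out $u_*\equiv 0$, and here is the main obstacle: the weak limit could a priori collapse to zero. This is precisely where Proposition~\ref{prop_7} is decisive: since $\tilde u\le u_n$ for every $n$ and $\tilde u\in\interior$, passing to the limit yields $\tilde u\le u_*$, so $u_*\neq 0$; by the regularity theory and the maximum principle $u_*\in\interior$, hence $u_*\in\mathcal{S}_+$. Finally $u_*=\inf\mathcal{S}_+$ by construction, so $u_*$ is the smallest positive solution. The negative case is symmetric: one works with $\mathcal{S}_-$, which is upward directed, extracts an increasing sequence converging to $v_*=\sup\mathcal{S}_-$, uses the bound $v\le\tilde v$ of Proposition~\ref{prop_7} to prevent collapse to zero, and concludes $v_*\in-\interior$ is the greatest negative solution.
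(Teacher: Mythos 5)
Your proposal is correct and follows essentially the same route as the paper: downward directedness of $\mathcal{S}_+$, extraction of a decreasing sequence realizing $\inf\mathcal{S}_+$, boundedness and the $\Ss$-property of $A_{p(\cdot)}$ to pass to the strong limit $u_*$, and the lower bound $\tilde u\le u$ from Proposition~\ref{prop_7} to exclude $u_*=0$, with the symmetric argument for $v_*$. The only difference is cosmetic: you sketch the truncation argument for directedness, which the paper simply quotes from \cite{11-Papageorgiou-Radulescu-Repovs-2017}, and the decreasing-sequence lemma is taken there from Hu--Papageorgiou \cite{9-Hu-Papageorgiou-1997} rather than \cite{13-Papageorgiou-Radulescu-Repovs-2019}.
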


\begin{proof}
	From the proof of Proposition 7 of Papageorgiou-R\u{a}dulescu-Repov\v{s} \cite{11-Papageorgiou-Radulescu-Repovs-2017}, we know that $\mathcal{S}_+$ is downward directed, that is, if $u_1, u_2\in \mathcal{S}_+$, then we can find $u \in \mathcal{S}_+$ such that $u\leq u_1$ and $u\leq u_2$. Then Lemma 3.10 of Hu-Papageorgiou \cite[p.\,178]{9-Hu-Papageorgiou-1997} implies that there exists a decreasing sequence $\{u_n\}_{n\in\N} \subseteq \mathcal{S}_+$ such that 
	\begin{align*}
		\inf \mathcal{S}_+ =\inf_{n\in\N} u_n.
	\end{align*}

	On account of Proposition \ref{prop_4} we have that $\{u_n\}_{n\in\N}\subseteq \W$ is bounded. Hence, we may assume that
	\begin{align}\label{27}
		u_n\weak u_* \quad\text{in }\W \quad\text{and}\quad
		u_n\to u_* \quad\text{in }\Lp{p(\cdot)}\text{ and in }\Lprand{p(\cdot)}.
	\end{align}

	Since $u_n \in \mathcal{S}_+$, we have
	\begin{align}\label{28}
		\l\langle \gamma_{p(\cdot)}'\l(u_n\r),h\r\rangle =\into f(x,u_n)h\diff x
	\end{align}
	for all $h\in \W$ and for all $n \in \N$. Choosing $h=u_n-u_*\in \W$ in \eqref{28}, passing to the limit as $n\to \infty$ and using \eqref{27}, we obtain
	\begin{align*}
		\lim_{n\to\infty} \l\langle A_{p(\cdot)}(u_n),u_n-u_*\r\rangle=0.
	\end{align*}
	Then, from Proposition \ref{prop_2}, we infer that 
	\begin{align}\label{29}
		u_n\to u_*\quad\text{in }\W.
	\end{align}

	So, passing to the limit as $n\to\infty$ in \eqref{28} and using \eqref{29}, one gets
	\begin{align*}
		\l\langle \gamma_{p(\cdot)}'(u_*),h\r\rangle =\into f\l(x,u_*\r)h\diff x\quad\text{for all }h \in \W.
	\end{align*}

	Furthermore, by Proposition \ref{prop_7}, we conclude that $\tilde{u}_*\leq u_*$. It follows that $u_*\in\mathcal{S}_+$ and $u_*=\inf\mathcal{S}_+$. 
	
	Similarly, we show that there exists $v_*\in \mathcal{S}_-$ such that $v\leq v_*$ for all $v\in \mathcal{S}_-$. We mention that $\mathcal{S}_-$ is upward directed, that is, if $v_1, v_2\in \mathcal{S}_-$, we can find $v\in \mathcal{S}_-$ such that $v_1\leq v$ and $v_2\leq v$.
\end{proof}

\section{Nodal solution}\label{section_4}

In this section, using the extremal constant sign solutions of problem \eqref{problem} obtained in Proposition \ref{prop_8}, we show the existence of a nodal solution located between them.

\begin{proposition}\label{prop_9}
	If hypotheses \textnormal{H$_0$}, \textnormal{H$_1$} hold, then problem \eqref{problem} admits a nodal solution
	\begin{align*}
		y_0 \in \l[v_*,u_*\r]\cap C^1(\close).
	\end{align*}
\end{proposition}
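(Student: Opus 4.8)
The plan is to follow the classical Dancer--Papageorgiou strategy for producing nodal solutions between extremal constant-sign ones, adapted to the anisotropic Robin setting. First I would introduce a truncation of the reaction $f(x,\cdot)$ to the order interval $[v_*,u_*]$: define the Carath\'eodory function
\begin{align*}
	e(x,s)=
	\begin{cases}
		f(x,v_*(x)) & \text{if } s<v_*(x),\\
		f(x,s) & \text{if } v_*(x)\le s\le u_*(x),\\
		f(x,u_*(x)) & \text{if } u_*(x)<s,
	\end{cases}
\end{align*}
together with its positive and negative "one-sided" variants $e_\pm$ obtained by also cutting off below at $0$ (resp. above at $0$). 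Let $E(x,s)=\int_0^s e(x,t)\diff t$, and similarly $E_\pm$, and set
\begin{align*}
	\varphi(u)=\gamma_{p(\cdot)}(u)-\into E(x,u)\diff x,\qquad \varphi_\pm(u)=\gamma_{p(\cdot)}(u)-\into E_\pm(x,u)\diff x,
\end{align*}
all in $C^1(\W)$. By Proposition~\ref{prop_3} and the boundedness of the truncations, $\varphi,\varphi_\pm$ are coercive and sequentially weakly lower semicontinuous, hence satisfy the C-condition and possess global minimizers. Using the anisotropic regularity theory (Fan \cite{6-Fan-2007}, Winkert--Zacher \cite{18-Winkert-Zacher-2012}) together with Proposition~\ref{prop_7} and the comparison arguments already used for Propositions~\ref{prop_4}--\ref{prop_8}, one shows $K_{\varphi}\subseteq[v_*,u_*]\cap C^1(\close)$, $K_{\varphi_+}\subseteq[0,u_*]\cap C^1(\close)_+$, and $K_{\varphi_-}\subseteq[v_*,0]\cap(-C^1(\close)_+)$.

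Next I would identify $u_*$ and $v_*$ as the global minimizers of $\varphi_+$ and $\varphi_-$ respectively. Indeed, exactly as in the proof of Proposition~\ref{prop_4} (choosing a small $t\in(0,1)$ times an element of $\interior$ and invoking hypothesis \textnormal{H$_1$}\textnormal{(iv)} so that $\tau_+<p_-$ forces the functional negative near $0$), the minimum value of $\varphi_+$ is negative, so its minimizer is a nonzero element of $K_{\varphi_+}\subseteq[0,u_*]$, hence a positive solution of \eqref{problem} bounded above by $u_*$; by the extremality of $u_*$ it must equal $u_*$. Symmetrically the minimizer of $\varphi_-$ is $v_*$. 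Since $\varphi$ and $\varphi_\pm$ coincide on $[0,u_*]$ and on $[v_*,0]$ respectively, and these order intervals have nonempty $C^1(\close)$-interiors containing $u_*$ and $v_*$ (by Proposition~\ref{prop_8} together with the anisotropic strong maximum principle, so that $u_*\in D_+$-type interior position and likewise $-v_*$), it follows that $u_*$ and $v_*$ are local $C^1(\close)$-minimizers of $\varphi$, hence — by the equivalence of $C^1$- and $\W$-local minimizers of Fan \cite{Fan-2007} / Gasi\'nski--Papageorgiou \cite{8-Gasinski-Papageorgiou-2011} — local $\W$-minimizers of $\varphi$.

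With two distinct local minimizers $u_*,v_*$ of the C-condition functional $\varphi$ in hand, the mountain pass theorem yields a critical point $y_0\in K_\varphi$ with $\varphi(y_0)>\max\{\varphi(u_*),\varphi(v_*)\}$ (using the standard lemma that a functional satisfying the C-condition with two strict local minima has a mountain-pass point); in particular $y_0\notin\{u_*,v_*\}$. Since $y_0\in K_\varphi\subseteq[v_*,u_*]\cap C^1(\close)$ and $e(x,\cdot)$ restricted to $[v_*,u_*]$ equals $f(x,\cdot)$, $y_0$ is a genuine solution of \eqref{problem} lying in $[v_*,u_*]\cap C^1(\close)$. It remains to exclude $y_0=0$ and to check it is nodal; here is where the main obstacle lies. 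One argues by critical groups: the mountain-pass characterization gives $C_1(\varphi,y_0)\neq 0$, whereas computing $C_k(\varphi,0)$ — using hypothesis \textnormal{H$_1$}\textnormal{(iv)} (the oscillatory/$\tau$-behaviour near $0$ with $\tau_+<p_-$, which makes $0$ a local \emph{maximizer} of $\varphi$ along the $C^1(\close)_+$ and $-C^1(\close)_+$ directions) — shows $C_k(\varphi,0)=0$ for all $k$, or at any rate $C_1(\varphi,0)=0$. Hence $y_0\neq 0$. Finally, if $y_0$ were of constant sign it would belong to $\mathcal S_+$ (resp. $\mathcal S_-$), forcing $u_*\le y_0$ (resp. $y_0\le v_*$) by Proposition~\ref{prop_8} and combined with $y_0\in[v_*,u_*]$ giving $y_0=u_*$ (resp. $y_0=v_*$), a contradiction. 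Therefore $y_0\in[v_*,u_*]\cap C^1(\close)$ is nodal, completing the proof. The delicate point is the critical-group computation at the origin, which must be done carefully because the reaction near zero is neither $(p(x)-1)$-sublinear in the usual one-signed sense nor of a simple concave type — the sign oscillation in \textnormal{H$_1$}\textnormal{(iv)} is precisely what is exploited.
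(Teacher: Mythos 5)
Your proposal is correct and follows essentially the same route as the paper: truncation of $f$ at the extremal solutions $v_*,u_*$ with one-sided variants, identification of $u_*,v_*$ as local $C^1(\close)$- hence $\W$-minimizers of the full truncated functional, the mountain pass theorem to produce $y_0\in K\subseteq[v_*,u_*]\cap C^1(\close)\setminus\{u_*,v_*\}$, and the comparison $C_1(\cdot,y_0)\neq 0$ versus $C_k(\cdot,0)=0$ (which the paper obtains from hypothesis H$_1$(iv) via Proposition 3.7 of Papageorgiou--R\u adulescu) to exclude $y_0=0$, with extremality giving nodality. The only cosmetic difference is that the paper secures the strict mountain pass geometry by assuming $K_\mu$ finite (otherwise one already has infinitely many nodal solutions), a point you implicitly subsume in your "standard lemma."
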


\begin{proof}
	Let $u_*\in\interior$ and $v_*\in-\interior$ be the two extremal constant sign solutions produced in Proposition \ref{prop_8}. We introduce the Carath\'eodory function $l\colon\Omega\times\R\to\R$ defined by
	\begin{align}\label{30}
		l(x,s)=
		\begin{cases}
			f\l(x,v_*(x)\r)&\text{if } s <v_*(x),\\[1ex]
			f\l(x,s\r)&\text{if } v_*(x)\leq s\leq u_*(x),\\[1ex]
			f\l(x,u_*(x)\r)&\text{if } u_*(x)<s.
		\end{cases}
	\end{align}
	We also consider the positive and negative truncations of $l(x,\cdot)$, namely the Cara\-th\'eodory functions $l_{\pm}\colon\Omega\times\R\to\R$ defined by
	\begin{align}\label{31}
		l_{\pm}(x,s)=l\l(x,\pm s^{\pm}\r).
	\end{align}
	We set $L(x,s)=\int^s_0l(x,t)\diff t$ and $L_{\pm}(x,s)=\int^s_0 l_{\pm}(x,t)\diff t$ and consider the $C^1$-functional $\mu,\mu_{\pm}\colon \W\to\R$ defined by
	\begin{align*}
		\mu(u)=\gamma_{p(\cdot)}(u)-\into L(x,u)\diff x \quad\text{for all }u \in \W
	\end{align*}
	and
	\begin{align*}
		\mu_{\pm}(u)=\gamma_{p(\cdot)}(u)-\into L_{\pm}(x,u)\diff x \quad\text{for all }u \in \W.
	\end{align*}
	
	Using \eqref{30} and \eqref{31} we easily show that
	\begin{align*}
		& K_\mu\subseteq \l[v_*,u_*\r]\cap C^1(\close),\\ 
		&K_{\mu_+}\subseteq \l[0,u_*\r]\cap C^1(\close)_+\\
		&K_{\mu_-}\subseteq \l[v_*,0\r]\cap \l(-C^1(\close)_+\r).
	\end{align*}

	The extremality of the solutions $u_*$ and $v_*$ implies that
	\begin{align}\label{32}
		& K_\mu\subseteq \l[v_*,u_*\r]\cap C^1(\close),\quad K_{\mu_+}=\l\{0,u_*\r\}, \quad K_{\mu_-}= \l\{v_*,0\r\}.
	\end{align}

	Due to \eqref{30} and \eqref{31} it is clear that $\mu_+\colon \W\to\R$ is coercive and it is also sequentially weakly lower semicontinuous. Hence, $\hat{u}_*\in\W$ exists such that
	\begin{align*}
		\mu_+\l(\hat{u}_*\r)=\min \l[\mu_+(u)\,:\, u \in \W\r]<0=\mu_+(0),
	\end{align*}
	see the proof of Proposition \ref{prop_6}. Hence, $\hat{u}_*\neq 0$ and so, $\hat{u}_*=u_*$, see \eqref{32}.
	
	It is clear that
	\begin{align*}
		\mu\big|_{C^1(\close)_+}=\mu_+\big|_{C^1(\close)_+}.
	\end{align*}
	Since $u_*\in\interior$, it follows that $u_*$ is a local $C^1(\close)$-minimizer of $\mu$. Therefore,
	\begin{align}\label{33}
		u_* \text{ is a local $\W$-minimizer of }\mu,
	\end{align}
	see Fan \cite{Fan-2007} and Gasi\'nski-Papageorgiou \cite{8-Gasinski-Papageorgiou-2011}.
	
	Similarly, working this time with the functional $\mu_-$, we show that
	\begin{align}\label{34}
		v_* \text{ is a local $\W$-minimizer of }\mu.
	\end{align}

	We may assume that $\mu(v_*)\leq \mu(u_*)$. The reasoning is similar if the opposite inequality holds using \eqref{34} instead of \eqref{33}. From \eqref{32} it is clear that we may assume that $K_\mu$ is finite. Otherwise, taking \eqref{30} and the extremality of the solutions $u_*$ and $v_*$ into account, we already have an infinity of smooth nodal solutions and so we are done. Then, \eqref{33} and Theorem 5.7.6 of Papageorgiou-R\u{a}dulescu-Repov\v{s} \cite[p.\,449]{13-Papageorgiou-Radulescu-Repovs-2019}, we know that we can find $\rho\in(0,1)$ small enough such that
	\begin{align}\label{35}
		\mu(v_*)\leq \mu(u_*)<\inf \l[\mu(u)\,:\,\|u-u_*\|=\rho\r]=m_*.
	\end{align}
	The coercivity of $\mu$ implies that $\mu$ satisfies the $C$-condition, see Papageorgiou-R\u{a}dulescu-Repov\v{s} \cite[Proposition 5.1.15 on p.\,369]{13-Papageorgiou-Radulescu-Repovs-2019}. This fact along with \eqref{35} permit the use of the mountain pass theorem. So, there exists $y_0\in\W$ such that
	\begin{align}\label{36}
		y_0\in K_{\mu}\subseteq \l[v_*,u_*\r]\cap C^1(\close), \quad m_*\leq \mu(y_0).
	\end{align}
	From \eqref{35} and \eqref{36} it follows that $y_0\not\in \{v_*,u_*\}$. Moreover, from Corollary 6.6.9 of Papageorgiou-R\u{a}dulescu-Repov\v{s} \cite[p.\,533]{13-Papageorgiou-Radulescu-Repovs-2019} we have
	\begin{align}\label{37}
		C_1\l(\mu,y_0\r)\neq 0.
	\end{align}

	On the other hand, from hypothesis \textnormal{H$_1$}\textnormal{(iv)} and Proposition 3.7 of Papageorgiou-R\u{a}dulescu \cite{10-Papageorgiou-Radulescu-2016}, we obtain
	\begin{align}\label{38}
		C_k\l(\mu,0\r)=0\quad\text{for all }k\in\N_0.
	\end{align}
	Comparing \eqref{37} and \eqref{38}, we conclude that $y_0\neq 0$. Since $y_0\in [v_*,u_*]\cap C^1(\close)$ with $y_0 \not\in \{0,u_*,v_*\}$, the extremality of $u_*$ and $v_*$ implies that $y_0$ is a smooth nodal solution of \eqref{problem}.
\end{proof}

Finally, we can state the following multiplicity theorem for problem \eqref{problem}, see Propositions \ref{prop_4}, \ref{prop_5} and \ref{prop_9}.

\begin{theorem}\label{main_result}
	If hypotheses \textnormal{H$_0$}, \textnormal{H$_1$} hold, then  problem \eqref{problem} has at least five nontrivial smooth solutions
	\begin{align*}
		u_0, \hat{u} \in\interior 
		\quad\text{and}\quad 
		v_0,\hat{v}\in-\interior  \quad\text{and}\quad y_0\in C^1(\close) \text{ nodal}
	\end{align*}
	with $u_0\neq \hat{u}$, $v_0\neq \hat{v}$ and
	\begin{align*}
		\hat{v}(x) \leq v_0(x)\leq y_0(x) \leq u_0(x) \leq \hat{u}(x)\quad\text{for all }x\in\close
	\end{align*}
	as well as
	\begin{align*}
		\eta_-<v_0(x)<0<u_0(x)<\eta_+\quad
		\text{for all }x \in\Omega.
	\end{align*}
\end{theorem}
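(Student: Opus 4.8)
The plan is to simply assemble the pieces already established in Propositions~\ref{prop_4}, \ref{prop_5} and \ref{prop_9} and verify that, taken together, they yield five distinct nontrivial smooth solutions with the claimed ordering and sign information. First I would invoke Proposition~\ref{prop_4} to obtain the first pair $u_0\in\interior$ and $v_0\in-\interior$ satisfying $\eta_-<v_0(x)<0<u_0(x)<\eta_+$ for all $x\in\close$; the strict inequalities here are exactly the localization statement of that proposition, so nothing new is needed. Next I would invoke Proposition~\ref{prop_5} to obtain the second pair $\hat u\in\interior$ and $\hat v\in-\interior$ with $u_0\le\hat u$, $\hat u\ne u_0$ and $\hat v\le v_0$, $\hat v\ne v_0$. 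Finally Proposition~\ref{prop_9} furnishes the nodal solution $y_0\in[v_*,u_*]\cap C^1(\close)$.

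The only genuine point to check is the chain of inequalities
\begin{align*}
	\hat v(x)\le v_0(x)\le y_0(x)\le u_0(x)\le\hat u(x)\quad\text{for all }x\in\close.
\end{align*}
The outer two inequalities are immediate from Proposition~\ref{prop_5}. For the middle two, I would use that $u_0, v_0\in\mathcal S_+\cup\mathcal S_-$ are themselves constant sign solutions, hence by the extremality in Proposition~\ref{prop_8} we have $u_*\le u_0$ and $v_0\le v_*$; combined with $y_0\in[v_*,u_*]$ from Proposition~\ref{prop_9}, this gives $v_0\le v_*\le y_0\le u_*\le u_0$. Here I must be slightly careful: $u_*$ is the \emph{smallest} positive solution, so $u_*\le u_0$, and $v_*$ is the \emph{greatest} negative solution, so $v_0\le v_*$; these inequalities hold pointwise on $\close$ because all solutions lie in $C^1(\close)$ and the order relations in $\Wp{p(\cdot)}$ restricted to continuous functions are pointwise. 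Thus the displayed chain follows.

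It remains to argue that the five solutions are pairwise distinct. The two positive solutions differ by construction ($u_0\ne\hat u$), as do the two negative ones ($v_0\ne\hat v$); positive and negative solutions are trivially distinct since they have opposite sign (all of $u_0,\hat u$ are strictly positive on $\Omega$ while $v_0,\hat v$ are strictly negative); and $y_0$ is distinct from all four because it changes sign — it is nodal — whereas $u_0,\hat u,v_0,\hat v$ are of constant sign. This last point is precisely what was established at the end of the proof of Proposition~\ref{prop_9}, where the critical group comparison $C_1(\mu,y_0)\ne0=C_k(\mu,0)$ forces $y_0\ne0$, and the extremality of $u_*,v_*$ then forces $y_0\notin\{0,u_*,v_*\}$ and hence nodal. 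There is no real obstacle here; the theorem is a bookkeeping consolidation of the three preceding propositions, so the proof is essentially a one-line citation of Propositions~\ref{prop_4}, \ref{prop_5}, \ref{prop_9} together with the extremality from Proposition~\ref{prop_8} to interlace $y_0$ between $v_0$ and $u_0$.
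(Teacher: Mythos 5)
Your proposal is correct and follows essentially the same route as the paper, which simply consolidates Propositions~\ref{prop_4}, \ref{prop_5} and \ref{prop_9}. Your explicit use of the extremality from Proposition~\ref{prop_8} to interlace $y_0$ via $v_0\le v_*\le y_0\le u_*\le u_0$ is exactly the implicit mechanism the paper relies on, and your remark on converting a.e.\ order to pointwise order on $\close$ via the $C^1(\close)$ regularity is a correct and worthwhile clarification.
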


\end{document}